\DeclareMathOperator{\End}{End} 
\DeclareMathOperator{\Ker}{Ker} \DeclareMathOperator{\Id}{Id}
 \DeclareMathOperator{\sign}{sign}
\DeclareMathOperator{\id}{id} 
\DeclareMathOperator{\tr}{tr} 
\DeclareMathOperator{\Hom}{Hom}
\numberwithin{equation}{section}
\theoremstyle{plain}
\newtheorem{theorem}{Theorem}[section]
\newtheorem{lemma}[theorem]{Lemma}
\newtheorem{proposition}[theorem]{Proposition}
\theoremstyle{definition}
\newtheorem{definition}[theorem]{Definition}
\newtheorem{example}[theorem]{Example}
\newtheorem{remark}[theorem]{Remark}
\begin{document}

\title[Framed tangle category and invariants of QSG]
{The diagram category of framed tangles and invariants of quantized symplectic group}

\author{Zhankui Xiao}
\address{Xiao: School of Mathematical Sciences, Huaqiao University,
Quanzhou, Fujian, 362021, China}
\email{zhkxiao@hqu.edu.cn}

\author{Yuping Yang}
\address{Yang: School of Mathematics and Statistics, Southwest University, Chongqing, 400715, China}
\email{yupingyang@swu.edu.cn}

\author{Yinhuo Zhang}
\address{Zhang: Department of Mathematics \& Statistics, University of Hasselt, Universitaire Campus, Diepeenbeek, 3590, Belgium}
\email{yinhuo.zhang@uhasselt.be}

\thanks{Xiao was partially supported by the National Natural Science Foundation of China
(No. 11301195), China Scholarship Council and a research foundation of
Huaqiao University (Project 2014KJTD14).}

\subjclass[2010]{Primary 17B37, 18D10; Secondary 20G05, 16T30}
\keywords{diagram category of framed tangles, invariant theory, quantized symplectic group,
Birman--Murakami--Wenzl algebra}

\begin{abstract}
In this paper we present a categorical version of the first and second fundamental theorems
of the invariant theory for the quantized symplectic groups. Our methods depend on
the theory of braided strict monoidal categories which are pivotal, more explicitly the
diagram category of framed tangles.
\end{abstract}

\maketitle
\section{Introduction}\label{xxsec1}

The fundamental theorems of the classical invariant theory are concerned with
describing generators and relations for invariants of the classical group actions \cite{W}.
There are several but essentially equivalent ways to formulate these fundamental theorems.

Let $G$ be the general (special) linear group, or the
symplectic group, or the orthogonal group. One formulation of the fundamental theorems is
in terms of the endomorphism algebra $\End_{G}(V^{\otimes n})$, where $V$ is the
corresponding natural representation. In this case, the first
fundamental theorem (FFT) describes the endomorphism algebra as the homomorphic image of some
known algebra and the FFT of this formulation is also known as Schur-Weyl duality.
For the general (special) linear group, the endomorphism algebra $\End_{G}(V^{\otimes n})$
is the homomorphic image of the group algebra of the symmetric group following Schur \cite{W}.
For the symplectic or the orthogonal group, the endomorphism algebra $\End_{G}(V^{\otimes n})$
is the homomorphic image of the Brauer algebra with specialized parameters (see
\cite{Br,DDH} for the symplectic case and \cite{Br,DH} for the orthogonal case).

On the other hand, except the case of general (special) linear groups, it is an open
problem for several decades to find a standard form of the second fundamental theorem (SFT)
in the formulation of endomorphism algebras. It needs to describe a suitable ideal in the
Brauer algebra using the standard generators of the Brauer algebra. Recently Hu and the first author
in \cite{HuXiao} proved the SFT for the symplectic group and Lehrer-Zhang in \cite{LZ1}
gave the SFT for the orthogonal group, where they took advantage of
the different versions of invariant theory. Furthermore, the methods in \cite{LZ1}
give rise to the possibility that there should be a unified description
for the different formulations of the fundamental theorems and this goal was achieved
by Lehrer and Zhang in \cite{LZ2} using the category of Brauer diagrams, a symmetric braided
strict monoidal category in the sense of Joyal and Street \cite{JS}. The main motivation of this paper is to
give a quantum analogue of \cite[Theorem 4.8]{LZ2} for the quantized symplectic group.

The invariant theory of quantum groups \cite{Dr,L} in a broad sense has been studied
extensively. An interesting and important aspect of it is the connection with the
topological quantum field theory, i.e. the quantum group theoretical construction of
the Jones polynomial of knots \cite{Jo}. We refer the
reader to the book \cite{Tu} for a comprehensive understanding of this topic.

Let $U_q(\mathfrak{g})$ be the quantized enveloping algebra associated to the
finite dimensional complex simple Lie algebra $\mathfrak{g}$ \cite{CP,L}. The Schur-Weyl duality
formulation of the FFT for quantum invariants describes the endomorphism algebra
$\End_{U_q(\mathfrak{g})}(V^{\otimes n})$, with $V$ the natural representation,
as the homomorphic image of the Hecke algebra of type A \cite{Ji,DPS} or the Birman-Murakami-Wenzl
algebra (BMW algebra for short) with some specialized parameters \cite{Ha,Hu}.
However, in this case, we do not know so much for the SFT. When $q$ is an indeterminant
and $\mathfrak{g}=\mathfrak{sp}_{2m}$, the symplectic Lie algebra, the SFT can be given
from the detailed structure and representations of the BMW algebra \cite{HuXiao,DHS},
where the proofs involve a somewhat technical computation.
Therefore, it is desirable to provide a standard and explicit formulae for the SFT
of the quantized symplectic group and the orthogonal group which has been suggested in
\cite[Section 8]{LZ2}.

This note can be seen as our first attempt, following the idea of Lehrer and Zhang, to
build an explicit connection between different formulations of the fundamental theorems
for the quantum invariant theory \cite{CP,LZZ,L}. We would like to remark that the proof
of \cite[Theorem 4.8]{LZ2} depends on
the linear formulation of the fundamental theorems of the symplectic or the
orthogonal group. However, in quantum case, the linear formulation of
the fundamental theorems do not exist as far as we know.
Our categorical approach here relies highly on the pivotal structure
of braided strict monoidal categories. More precisely, we introduce
the diagram category of framed tangles
\cite{FY}, which is also known as the category of non-directed ribbon graphs \cite{RT,Tu}.
The fact is that the Reshetikhin-Turaev functor (RT-functor for short, see Section \ref{xxsec3}) is pivotal,
a full monoidal functor from the diagram category of framed tangles to the category of tensor
representations of the quantized symplectic group. This fact has been used in the
classical or non-quantum case in \cite{LZ2,RW}.

The contents of this note is organized as follows. In Section \ref{xxsec2} we first introduce
some definitions and basic properties related to the diagram category of framed tangles,
as a strict braided pivotal monoidal category. We then present explicitly, in Section \ref{xxsec3},
the RT-functor between the diagram category of framed tangles and the category
of tenor representations of $U_q(\mathfrak{sp}_{2m})$.
In Section \ref{xxsec4}, we study the fundamental theorems of invariants
for the quantized symplectic group over the rational function field, and prove a quantum analogue
of \cite[Theorem 4.8]{LZ2}.
\vspace{6pt}

\noindent{\bf Acknowledgements.} The work was done while Xiao was visiting the University of Hasselt
and he is very grateful for its hospitality.

\section{Diagram category of Framed Tangles}\label{xxsec2}

In the present section, we introduce some basic definitions and facts about the diagram category
of framed tangles, the BMW algebra.
All the tangles in this paper are assumed to be non-directed.
For the terminologies about monoidal and tensor categories, we refer the
reader to \cite{EGNO} for their precise meanings.

\subsection{Framed tangle category.}\label{xxsec2.1}

Let us first recall some definitions from \cite{FY,Ka}.

\begin{definition}\label{2.1}
A {\em tangle} is a knot diagram inside a rectangle consisting of a finite number
of vertices in the top and the bottom row of the rectangle (not necessarily the same number)
and a finite number of arcs inside the rectangle such that each vertex is connected to another
vertex by exactly one arc, and arcs either connect two vertices or are closed curves.
\end{definition}

\begin{definition}\label{2.2}
Two tangles are {\em regularly isotopic} if they are equivalent by a sequence of the following
Reidemeister Moves II and III
\begin{center}
\epsfig{figure=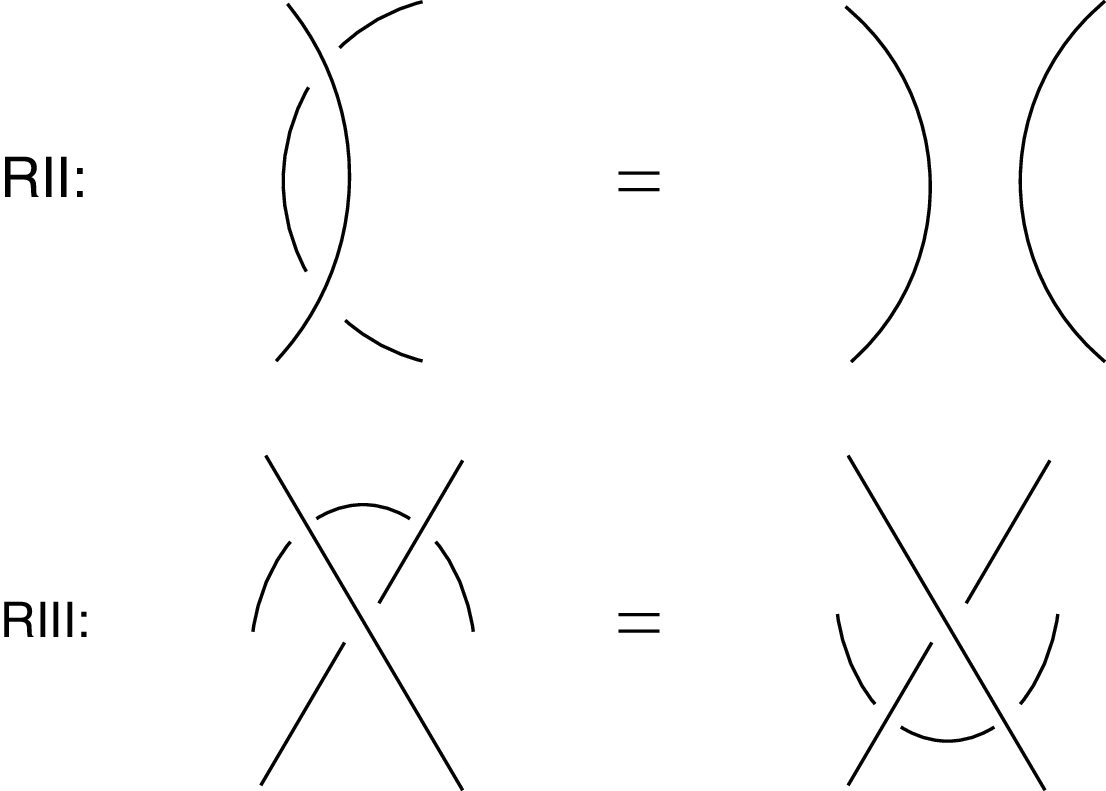,clip,height=3.2cm}
\end{center}
and the isotopies fixing the boundary of the rectangle (or any rotation of them in a local portion
of the rectangle). By $T(s,t)$ we denote the set of all tangles with $s$ vertices
in the top row and $t$ vertices in the bottom row subject to the relations of regular
isotopy and one more relation, named the modified Reidemeister Move I, as follows:
\begin{center}
\epsfig{figure=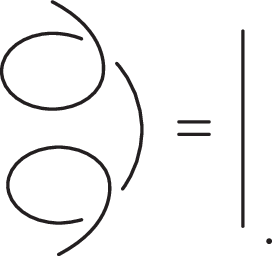,clip,height=1.8cm}
\end{center}
\end{definition}

There are two operations on tangles:
\begin{eqnarray}\label{eq2.1}
\circ:& & T(s,t)\times T(t,l)\longrightarrow T(s,l),\\
\otimes :& & T(s,t)\times T(l,m)\longrightarrow T(s+l,t+m).
\end{eqnarray}
The {\em composition} $\circ$ is defined by concatenation of tangles, reading
from up to down for our late convenience,
and the {\em tensor product} $\otimes$ is given by juxtaposition of tangles. More explicitly,
$D\otimes D'$ means placing $D'$ on the right of $D$ without overlapping.
In order to obtain the explicit definition of the framed tangle category, we need to add
an element $id_0$ in $T(0,0)$ and define the corresponding composition and tensor product as follows:
for any $f\in T(0,t),$ $g\in T(s,0)$ and $h\in  T(s,t),$
\begin{eqnarray}\label{eq2.4}
&&id_0\circ f=f,\ \  g\circ id_0=g,\\
&&h\otimes id_0=id_0\otimes h=h.
\end{eqnarray}

With these notations, we can give the definition of the framed tangle category
following \cite[Definition 3.8]{FY}.

\begin{definition}\label{2.3}
The {\em framed tangle category}, denoted $\mathcal{FT}$, is a strict monoidal category with objects
$\mathbb{N}=\{0,1,2,\ldots\}$ and morphisms given by $\Hom_{\mathcal{FT}}(s,t)=T(s,t)$.
The tensor product on objects is defined by $m\otimes n:=m+n$, and the composition and tensor product
on morphisms are defined by (\ref{eq2.1}-2.4).
\end{definition}

The framed tangle category $\mathcal{FT}$ is a free pivotal strict monoidal category with
a self-dual generator, see \cite[Theorem 3.6]{FY}. The duality functor $^*$ is a contravariant
functor taking each object to itself, and
acting on morphisms by rotating the rectangle containing a tangle through $\pi$ about an axis
perpendicular to the rectangle. It is clear that $^{**}$ is just the identity functor $\Id_{\mathcal{FT}}$,
see \cite{FY} for more details. Here we remark that $T(s,t)=\emptyset$
by definition if $s+t$ is an odd number.

\subsection{Diagram category of framed tangles}\label{xxsec2.2}

To meet our need, we give the linear version of the framed tangle category.
Let $\mathbb{K}$ be the rational function field $\mathbb{Q}(r,q)$ with $r,q$ being indeterminates.
The linear framed tangle category, denoted $\mathbb{K}(\mathcal{FT})$, consists of the same objects of
$\mathcal{FT}$, but $\Hom_{\mathbb{K}(\mathcal{FT})}(s,t)$ is the $\mathbb{K}$-linear span of $T(s,t)$.
The composition and the tensor product of morphisms are given by bilinear extensions of those in $\mathcal{FT}$.
The diagram category of framed tangles (see below Definition \ref{2.5}) can be obtained as a quotient of
$\mathbb{K}(\mathcal{FT})$ subject to certain relations on morphisms.

\begin{definition}\label{2.4}
For $s,t\in \mathbb{N}$ we denote by $\mathcal{D}(s,t)$ the $\mathbb{K}$-linear span of $T(s,t)$ subject to the following relations:
\begin{center}
\epsfig{figure=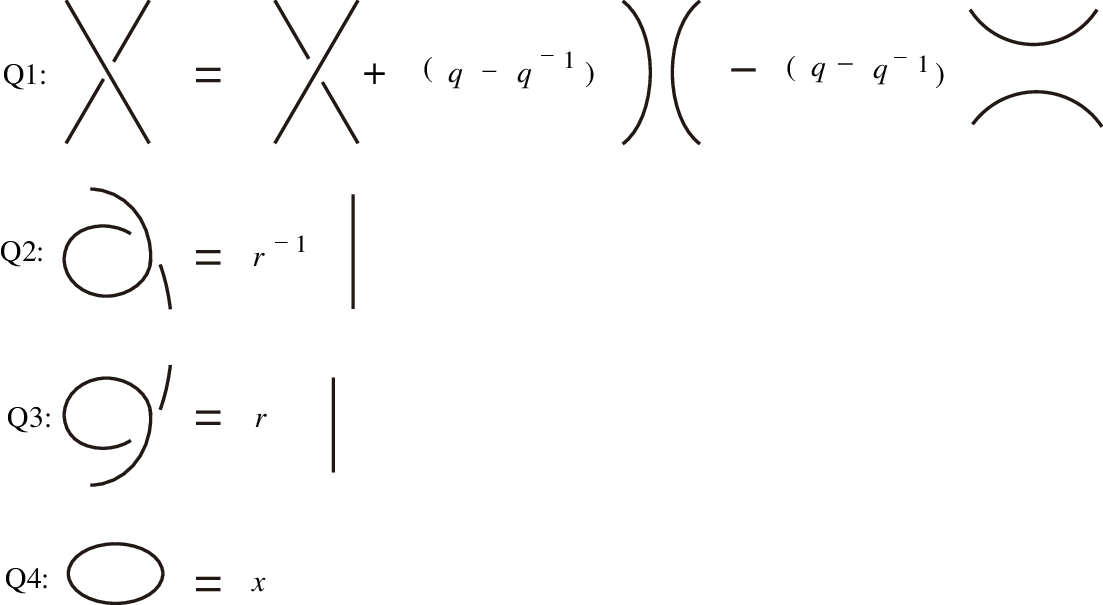,clip,height=6.0cm}
\end{center}
where $x=1+\frac{r-r^{-1}}{q-q^{-1}}.$
\end{definition}

We shall explain the composition defined by (\ref{eq2.1}) can be extended to $\mathcal{D}(s,t)$. According to \cite[Theorem 3.5]{FY}, any tangle in $T(s,t)$ can be generated through compositions and tensor products by the following five tangles:
\begin{center}
\begin{picture}(280, 40)(-5,0)
\put(0, 0){\line(0, 1){40}}
\put(5, 0){,}

\put(40, 0){\line(1, 2){8}}
\put(60, 0){\line(-1, 2){20}}
\put(60, 40){\line(-1, -2){8}}
\put(65, 0){,}

\put(100, 40){\line(1, -2){8}}
\put(100, 0){\line(1, 2){20}}
\put(120, 0){\line(-1, 2){8}}
\put(125, 0){,}

\qbezier(160, 0)(175, 60)(190, 0)
\put(195, 0){,}

\qbezier(230, 30)(245, -30)(260, 30)
\put(265, 0){.}
\end{picture}
\end{center}
We shall refer to these generators as the {\em elementary} tangles and denote them by $I, X, X^{op}, A, U$
respectively. In order to show that the composition can be extended to $\mathcal{D}(s,t)$,
We only need to show that when the terms in both handsides of the relations Q1-Q4
compose with an elementary tangle, the results are still identities.
This follows from a direct verification. It is obvious that there is a natural tensor operation over
$\mathcal{D}(s,t)$. Therefore we can now give the following new definition, which is a little stronger
than the one in \cite[Definition 4.1.2]{FY}.

\begin{definition}\label{2.5}
The {\em diagram category of framed tangles} with parameters $r,q$, denoted $\mathbb{D}(r,q)$,
is a strict monoidal category with objects $\mathbb{N}=\{0,1,2,\ldots\}$ and morphisms
$\Hom_{\mathbb{D}(r,q)}(s,t)=\mathcal{D}(s,t)$. The composition $\circ$ and the tensor product
$\otimes$ are inherited from those of $\mathbb{K}(\mathcal{FT})$.
\end{definition}

From now on we only focus us on the diagram category of framed tangles. The category
$\mathbb{D}(r,q)$ has a contravariant functor $^*:\mathbb{D}(r,q)\to \mathbb{D}(r,q)$
such that $n^*=n$ for any object $n\in\mathbb{N}$. In order to get a clear picture of morphisms
under the functor $^*$,
we define the following useful tangles with notations compatible with those in the general
theory of tensor category \cite{EGNO}. Let $U_n: n\otimes n\to 0$ and $A_n: 0\to n\otimes n$
be tangles defined by
\begin{eqnarray}\label{eq2.5}
U_n&=&(I^{\otimes (n-1)}\otimes U\otimes I^{\otimes(n-1)})\circ\cdots\circ (I\otimes U\otimes I)\circ U,\\
A_n&=&A\circ(I\otimes A\otimes I)\circ\cdots\circ (I^{\otimes (n-1)}\otimes A\otimes I^{\otimes (n-1)}).
\end{eqnarray}
These are depicted as follows
\begin{center}
\begin{picture}(250, 40)(-5,0)
\put(0, 10){$U_n=$}
\qbezier(25, 30)(55, -35)(85, 30)
\put(32, 25){\tiny$n$}
\put(32, 20){...}
\qbezier(40, 30)(55, -15)(70, 30)
\put(75, 0){,}

\put(125, 10){$A_n=$}
\qbezier(150, 0)(180, 60)(210, 0)
\put(158, 10){...}
\put(158, 0){\tiny$n$}
\qbezier(165, 0)(180, 40)(195, 0)
\put(215, 0){.}

\end{picture}
\end{center}
Let $I_n:=I^{\otimes n}$ be the identity morphism $\id_n$. The linear mapping
$^*: \mathcal{D}(s,t)\to \mathcal{D}(t,s)$
defined for any tangle $D\in \mathcal{D}(s,t)$ by $D^*:=(I_t\otimes A_s)\circ
(I_t\otimes D\otimes I_s)\circ (U_t\otimes I_s)$ can be described as follows
\begin{center}
\begin{picture}(100, 60)(-20,0)
\put(-5, 20){\line(1, 0){35}}
\put(-5, 20){\line(0, 1){20}}
\put(30, 20){\line(0, 1){20}}
\put(-5, 40){\line(1, 0){35}}
\put(8, 25){$D$}

\qbezier(5, 40)(40, 95)(60, 0)
\qbezier(20, 40)(35, 70)(45, 0)
\put(47, 10){...}

\qbezier(5, 20)(-15, -10)(-20, 60)
\qbezier(20, 20)(-20, -35)(-35, 60)
\put(-30, 50){...}
\put(65, 0){.}
\end{picture}
\end{center}

By \cite[Lemma 3.3]{FY}, we have $^{**}=\Id$, the identity functor. Moreover, the following compositions
\[\xymatrix@C=1.5cm{n\ar[r]^{A_n\otimes I_n \ \ \ \  }& n\otimes n\otimes n\ar[r]^{\ \ \ \ I_n\otimes U_n}& n ,}\]
\[\xymatrix@C=1.5cm{n\ar[r]^{I_n\otimes A_n \ \ \ \ \ }&n\otimes n\otimes n\ar[r]\ar[r]^{\ \ \ \  \ U_n\otimes I_n}& n}\]
are both the identity morphism. Hence the contravariant functor $^*$ gives rise to the duality of
$\mathbbm{D}(r,q)$ with evaluation $U$ and coevaluation $A$. Hence we have

\begin{proposition}\label{2.6}
The diagram category of framed tangles $\mathbb{D}(r,q)$ is pivotal.
\end{proposition}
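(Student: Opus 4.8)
The plan is to verify the defining axioms of a pivotal category directly from the diagrammatic structure already set up. Recall that a strict monoidal category is \emph{pivotal} if it is rigid (every object has a two-sided dual, here realized by $^*$ with $n^* = n$) together with a monoidal natural isomorphism between the identity functor and the double-dual functor; since we have already observed $^{**} = \Id$, the double-dual functor is literally the identity, so the required natural isomorphism can be taken to be the identity as well, and the only substantive content is to check that $\mathbb{D}(r,q)$ is genuinely rigid, i.e. that the morphisms $A_n \colon 0 \to n\otimes n$ and $U_n \colon n\otimes n \to 0$ exhibit $n$ as its own left and right dual.

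First I would record that it suffices to treat the generating object $1 \in \mathbb{N}$: since duality is monoidal (duals of tensor products are tensor products of duals in the opposite order), rigidity for the tensor-generator propagates to all $n$, and indeed the excerpt's definitions of $U_n$ and $A_n$ as iterated tensor-composites of $U = U_1$ and $A = A_1$ are exactly the manifestation of this. So the core is the pair of \emph{snake} (zig-zag) identities for $n=1$: the composites
\[
\xymatrix@C=1.5cm{1\ar[r]^{A\otimes I}& 1\otimes 1\otimes 1\ar[r]^{I\otimes U}& 1} \quad\text{and}\quad \xymatrix@C=1.5cm{1\ar[r]^{I\otimes A}& 1\otimes 1\otimes 1\ar[r]^{U\otimes I}& 1}
\]
must both equal $\id_1 = I$. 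Diagrammatically each of these is a cup composed with a cap sharing a common strand — the classic ``S''-shaped tangle — and straightening it to a vertical strand is precisely a planar isotopy fixing the boundary of the rectangle, hence an instance of regular isotopy; so these identities hold already in $\mathcal{FT}$ and descend to $\mathbb{D}(r,q)$. I would then invoke \cite[Theorem 3.6]{FY}, which states that $\mathcal{FT}$ is a free pivotal strict monoidal category with self-dual generator, to conclude that $\mathcal{FT}$ is pivotal, and observe that passing to the $\mathbb{K}$-linear span $\mathbb{K}(\mathcal{FT})$ and then to the quotient $\mathbb{D}(r,q)$ by the relations Q1--Q4 preserves this structure: one checks that the functor $^*$ is well-defined on $\mathbb{D}(r,q)$, i.e. that the images of both sides of Q1--Q4 under $^*$ coincide, which is immediate since $^*$ acts by a geometric rotation of the defining rectangle and the relations Q1--Q4 are themselves rotation-symmetric (or pair up under rotation). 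Finally, the compatibility of $^*$ with the tensor structure — $(D\otimes D')^* = D'^*\otimes D^*$ on morphisms, and the coherence of $A_n, U_n$ with the associativity of $\otimes$ — follows from the explicit formula $D^* = (I_t\otimes A_s)\circ(I_t\otimes D\otimes I_s)\circ(U_t\otimes I_s)$ together with functoriality of $\otimes$ and the interchange law.

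The main obstacle, such as it is, is purely bookkeeping: one must confirm that the quotient by Q1--Q4 does not break the zig-zag identities or the naturality of $^*$. This is guaranteed because (a) the zig-zag identities hold at the level of $\mathcal{FT}$ before any linear relations are imposed, so they survive any quotient, and (b) the verification — already alluded to in the excerpt, ``This follows from a direct verification'' — that Q1--Q4 are compatible with composition by elementary tangles $I, X, X^{op}, A, U$ simultaneously shows they are compatible with the building blocks of $^*$. Thus no genuinely new computation is needed beyond what the text has set up; the proof is essentially the assembly of \cite[Theorems 3.5 and 3.6]{FY}, the displayed snake identities, and the rotation-symmetry of the defining relations. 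I would therefore present the argument as: (1) $^*$ is a well-defined contravariant endofunctor with $^{**}=\Id$ and $n^*=n$; (2) $A$ and $U$ satisfy the two zig-zag identities, already displayed, hence give a duality; (3) monoidality of $^*$ is inherited from $\mathcal{FT}$; (4) therefore $\mathbb{D}(r,q)$ is rigid with $\Id \cong {}^{**}$ the identity, i.e. pivotal. $\qed$
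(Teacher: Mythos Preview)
Your proposal is correct and follows essentially the same approach as the paper: the paper's argument is the paragraph immediately preceding the proposition, which observes that $^{**}=\Id$ and that the two displayed zig-zag composites $(A_n\otimes I_n)\circ(I_n\otimes U_n)$ and $(I_n\otimes A_n)\circ(U_n\otimes I_n)$ are the identity, whence $^*$ with $U,A$ gives a duality and $\mathbb{D}(r,q)$ is pivotal. Your write-up is simply a more careful elaboration of the same points, with extra attention to the reduction to $n=1$ and to the compatibility of the quotient by Q1--Q4 with $^*$, neither of which the paper spells out.
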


The following analogue of \cite[Theorem 2.6]{LZ2} to some extent is known for experts
(see \cite{FY,RT,Tu}). However, we haven't found it in literatures, and hence sketch
the proof here for the completeness.

\begin{theorem}\label{2.7}
Any morphism of $\mathbbm{D}(r,q)$ is generated by four elementary tangles
$I, X, A, U$ through linear combination, composition and tensor product.
A complete set of relations among these four generators is given by the following identities and their dualities:
\begin{eqnarray}\label{eq2.7}
&I\circ I=I,\ (I\otimes I)\circ X=X,\ A\circ (I\otimes I)=A,&\\
&X\circ X=I\otimes I+(q-q^{-1})X-r^{-1}(q-q^{-1})U\circ A,&\\
&(X\otimes I)\circ(I\otimes X)\circ(X\otimes I)=(I\otimes X)\circ (X\otimes I)\circ (I\otimes X),&\\
&A\circ X=r^{-1}A,&\\
&A\circ U=x,&\\
&(A\otimes I)\circ (I\otimes X)=(I\otimes A)\circ (\big(X+(q^{-1}-q)(I\otimes I-U\circ A)\big)\otimes I),&\\
&(A\otimes I)\circ (I\otimes U)=I.&
\end{eqnarray}
\end{theorem}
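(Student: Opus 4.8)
The plan is to establish the theorem in two halves: generation and relations. For the generation part, we invoke \cite[Theorem 3.5]{FY}, which tells us that every tangle in $T(s,t)$ is built from the five elementary tangles $I, X, X^{op}, A, U$ via composition and tensor product. Passing to $\mathbb{D}(r,q)$, we must eliminate $X^{op}$ from this list; this follows from the relation Q2 (the second displayed relation, $X\circ X = I\otimes I + (q-q^{-1})X - r^{-1}(q-q^{-1})U\circ A$, which is invertible), since it lets us express $X^{op} = X^{-1}$ as a linear combination of $I\otimes I$, $X$, and $U\circ A$. Hence $I, X, A, U$ together with linear combinations suffice, and by Proposition \ref{2.6} (pivotality) we may freely use the duality functor $^*$, which sends our generators to $I, X^{op}, U, A$ respectively — all already in our list or expressible in it.

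For the relations part, I would argue that the displayed identities (\ref{eq2.7}) together with their dualities generate \emph{all} relations in $\mathbb{D}(r,q)$. The strategy is to show that the free strict monoidal category on the generators $I, X, A, U$ modulo these relations, call it $\mathcal{C}$, is isomorphic to $\mathbb{D}(r,q)$. One direction is immediate: all the listed identities are verified to hold in $\mathbb{D}(r,q)$ — each is a direct diagrammatic consequence of the defining relations Q1--Q4 of Definition \ref{2.4} together with regular isotopy and the modified Reidemeister move. For instance, the braid relation is Reidemeister III, $A\circ X = r^{-1}A$ and $A\circ U = x$ come from the (modified) Reidemeister I and the loop-value relation, and the last two identities are the pivotal compatibility of $X$ with the duality. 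This gives a functor $\mathcal{C}\to\mathbb{D}(r,q)$. For the reverse direction, one uses the universal property: $\mathbb{D}(r,q)$ is a quotient of $\mathbb{K}(\mathcal{FT})$, and $\mathbb{K}(\mathcal{FT})$ is freely generated (as a $\mathbb{K}$-linear strict monoidal category) by the same generators modulo only the Reidemeister-type relations by \cite[Theorem 3.5, 3.6]{FY}; one then checks that the additional defining relations Q1--Q4 of $\mathbb{D}(r,q)$ are consequences of our list (\ref{eq2.7}), so the quotient map factors through $\mathcal{C}$, yielding the inverse functor.

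The main obstacle I anticipate is the completeness (rather than soundness) of the relation list — i.e., proving that no further relations are needed. Concretely, this means showing that $\mathcal{C}$ has the right size: that the hom-spaces $\Hom_\mathcal{C}(s,t)$ are spanned by (images of) tangle diagrams with the correct dimension count, matching $\mathcal{D}(s,t)$. The cleanest route is a normal-form argument: using the braid relations and the pivotal identities, one can push all cups ($A$) to the top and all caps ($U$) to the bottom of any word in the generators, reducing every morphism to a standard form indexed by a tangle diagram; then Q2 and $A\circ X = r^{-1}A$ and $A\circ U = x$ let one remove crossings between cup/cap strands and kill closed loops, reaching a basis of "reduced" tangle diagrams. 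Matching this basis with the known basis of $\mathcal{D}(s,t)$ (coming from the BMW algebra structure, cf.\ \cite{FY}) closes the argument. Since the excerpt says the proof is only sketched "for completeness," I would present the soundness checks briefly and refer to \cite{FY} for the dimension/basis matching underlying completeness, noting along the way that the modified Reidemeister move I is exactly what forces the framing parameter $r$ into the relation $A\circ X = r^{-1}A$.
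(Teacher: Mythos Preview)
Your generation argument and the soundness direction match the paper's approach, modulo a labeling slip: the identity $X\circ X=I\otimes I+(q-q^{-1})X-r^{-1}(q-q^{-1})U\circ A$ is relation (2.8), not Q2, and it is \emph{derived} from Q1 (the skein relation $X-X^{op}=(q-q^{-1})(I\otimes I-U\circ A)$) together with RII and Q2; in the paper's numbering Q2 corresponds to $A\circ X=r^{-1}A$. Either way, one eliminates $X^{op}$ as a generator.

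For completeness, you overcomplicate. You actually state the correct argument in your middle paragraph---check that the defining relations of $\mathbb{D}(r,q)$ follow from the list (2.7)--(2.13)---but then retreat from it and propose a normal-form/dimension-counting argument instead. The paper does not do this, and it is unnecessary. Since $\mathbb{D}(r,q)$ is \emph{defined} as the quotient of $\mathbb{K}(\mathcal{FT})$ by Q1--Q4, and since \cite[Theorem 3.5]{FY} already provides a complete presentation of $\mathcal{FT}$ by the generators $I,X,X^{op},A,U$ and the isotopy/Reidemeister relations, completeness reduces to a finite verification: set $X^{op}:=X-(q-q^{-1})(I\otimes I-U\circ A)$ and check that RII, RIII, sliding, straightening, the modified Reidemeister I, and Q1--Q4 are all consequences of (2.7)--(2.13) and their duals. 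No basis construction or dimension matching with the BMW algebra is required; the presentation of $\mathcal{FT}$ from \cite{FY} does all the heavy lifting.
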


\begin{proof}
Since any tangle can be generated by $I, X, X^{op}, A, U$ through the composition and the tensor product,
according to \cite[Theorem 3.5]{FY}, any morphism in $\mathcal{D}(s,t)$ can be obtained
by $I, X, A, U$ through linear combination, composition and tensor product because of
the relation Q1. Thus, we proved the first claim of the theorem.

For the second part of the theorem, we first explain how to obtain the identities (\ref{eq2.7}-2.11)
and their dualities. The identities (2.7) and their dualities are obvious. The identity (2.8)
can be obtained from the relation Q1 by composing with elementary tangle $X$, using
the Reidemeister move RII and the identity Q2.
The identities (2.9-2.11) are corresponding to the Reidemeister move RIII, Q2 and Q4 respectively.
Finally, (2.12-2.13) are deduced from the singular isotopy, called sliding and straightening respectively,
see \cite[Definition 2.3]{FY}. Now we need to prove that these identities (\ref{eq2.7}-2.13) are complete.
For any $s,t\in \mathbb{N}$, it follows from the definition of $\mathcal{D}(s,t)$
that all relations among the four generators are determined by the regular isotopy
and the relations Q1-Q4. Set $X^{op}:=X-(q-q^{-1})I\otimes I-(q-q^{-1})U\circ A$.
Then the regular isotopy, the modified Reidemeister move I, and the relations Q1-Q4 can be deduced from
the identities (\ref{eq2.7}-2.13) and their dualities.
\end{proof}

For any object $n>0$, the set of morphisms $\mathcal{D}(n,n)$ forms a unital
associative $\mathbb{K}$-algebra under the composition of tangles. This is the
Kauffman's tangle algebra \cite{Ka} which is isomorphic to a BMW algebra. We express BMW
algebra by generators and relations for late use.

\begin{definition}{\rm (\cite{BW,Mu})} \label{2.8}
The generic \emph{BMW algebra} $B_n(r,q)$ is
a unital associative $\mathbb{K}$-algebra generated by the
elements $T_i^{\pm1}$ and $E_i$ for $1\leq i\leq n-1$ subject to the
relations:
  \begin{alignat}{2}
    T_i-T_i^{-1}&=(q-q^{-1})(1-E_i), &\quad&\text{ for } 1\leq i\leq n-1,\\
    E_i^2&=x E_i, &&\text{ for } 1\leq i\leq n-1,\\
    T_iT_{i+1} T_i&=T_{i+1} T_iT_{i+1}, &&\text{ for } 1\leq i\leq n-2,\\
    T_iT_j&=T_jT_i, &&\text{ for }|i-j|>1,\\
    E_iE_{i+1} E_i&=E_i,\;  E_{i+1}E_i E_{i+1}=E_{i+1},
    &&\text{ for } 1\leq i\leq n-2,\\
    T_iT_{i+1}E_i&=E_{i+1} E_i,\; T_{i+1}T_iE_{i+1}=E_iE_{i+1}, &&\text{
      for } 1\leq i\leq n-2,\\
    E_iT_i&=T_iE_i=r^{-1}E_i &&\text{ for } 1\leq i\leq n-1,\\
    E_iT_{i+1}E_i&=rE_i,\;E_{i+1}T_iE_{i+1}=rE_{i+1}, &&\text{ for }
    1\leq i\leq n-2,
      \end{alignat}
where $x=1+\dfrac{r-r^{-1}}{q-q^{-1}}$.
\end{definition}

\begin{theorem} \label{2.9}
Kauffman's tangle algebra $\mathcal{D}(n,n)$ is isomorphic to the BMW algebra $B_n(r,q)$
with the isomorphism given by
$$
T_i\mapsto I_{i-1}\otimes X\otimes I_{n-1-i},\ \ E_i\mapsto I_{i-1}\otimes (U\circ A)\otimes I_{n-1-i}.
$$
\end{theorem}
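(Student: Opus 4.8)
The plan is to produce a surjective $\mathbb{K}$-algebra homomorphism $\phi\colon B_n(r,q)\to\mathcal{D}(n,n)$ realising the stated assignment, and then to conclude by a dimension count. First I would set $X_i:=I_{i-1}\otimes X\otimes I_{n-1-i}$ and $E_i:=I_{i-1}\otimes(U\circ A)\otimes I_{n-1-i}$ in $\mathcal{D}(n,n)$, and check that these elements satisfy every defining relation of Definition \ref{2.8}. Each of those relations should follow from the relations collected in Theorem \ref{2.7}, applied to a local portion of a diagram, tensored with identities, and combined with the interchange law of a strict monoidal category. For instance, the braid relation is the identity $(X\otimes I)\circ(I\otimes X)\circ(X\otimes I)=(I\otimes X)\circ(X\otimes I)\circ(I\otimes X)$ tensored with $I_{n-2}$; the commutativity $T_iT_j=T_jT_i$ for $|i-j|>1$ is functoriality of $\otimes$; $E_i^2=xE_i$ is $(U\circ A)\circ(U\circ A)=U\circ(A\circ U)\circ A=x\,(U\circ A)$ using $A\circ U=x$; and, combining the quadratic relation for $X$ with $A\circ X=r^{-1}A$ and its dual $X\circ U=r^{-1}U$, one computes $X_i^{-1}=X_i-(q-q^{-1})(I_n-E_i)$, which yields both $T_i-T_i^{-1}=(q-q^{-1})(1-E_i)$ and $E_iT_i=T_iE_i=r^{-1}E_i$. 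The remaining relations $E_iE_{i\pm1}E_i=E_i$, $T_iT_{i+1}E_i=E_{i+1}E_i$, $E_iT_{i+1}E_i=rE_i$ and their mirror images should likewise drop out of the sliding and straightening relations of Theorem \ref{2.7} together with $A\circ X=r^{-1}A$, $A\circ U=x$ and their dualities. Thus $\phi$ is well defined.

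Next I would prove that $\phi$ is surjective. By Theorem \ref{2.7} the category $\mathbb{D}(r,q)$ is generated, under composition, tensor product and $\mathbb{K}$-linear combination, by $I,X,A,U$; I would refine this to the claim that $\mathcal{D}(n,n)$ is already spanned over $\mathbb{K}$ by composites of the local elements $X_i^{\pm1}$ and $E_i$. Given a tangle $D\in T(n,n)$, put it in a levelled (Morse) position, so that it is a vertical composite of slices each carrying a single local crossing, cap, or cup. Using the quadratic relation for $X$ to switch over- and under-crossings, which introduces only extra terms that are identities or local copies of $U\circ A$, together with closed-loop removal and the relations defining $\mathcal{D}(n,n)$, I would reduce $D$ modulo those relations to a $\mathbb{K}$-linear combination of loop-free tangles placed in a fixed ``totally descending'' normal form. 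Such a tangle is determined by the perfect matching of its $2n$ boundary points, so there are at most $(2n-1)!!$ of them, and each one is a composite of $E_i$'s, accounting for its cup--cap part, with a product of $X_i$'s. Hence every morphism of $\mathcal{D}(n,n)$ lies in the image of $\phi$, and in particular $\dim_{\mathbb{K}}\mathcal{D}(n,n)\le (2n-1)!!$.

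Finally I would settle injectivity by comparing dimensions. Surjectivity gives $\dim_{\mathbb{K}}\mathcal{D}(n,n)\le\dim_{\mathbb{K}}B_n(r,q)$, and the standard rewriting argument for the presentation in Definition \ref{2.8} (the easy half of \cite{BW,Mu}) shows $B_n(r,q)$ is spanned by at most $(2n-1)!!$ monomials in the $T_i^{\pm1}$ and $E_i$, whence $\dim_{\mathbb{K}}B_n(r,q)\le (2n-1)!!$. It then suffices to establish the matching lower bound $\dim_{\mathbb{K}}\mathcal{D}(n,n)\ge (2n-1)!!$, i.e. that the $(2n-1)!!$ descending tangles from the previous step are $\mathbb{K}$-linearly independent; once this holds, all three quantities equal $(2n-1)!!$, and a surjection between $\mathbb{K}$-vector spaces of equal finite dimension is an isomorphism. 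To get the linear independence I would either equip $\mathcal{D}(n,n)$ with the Markov trace obtained by closing a tangle into a framed link and evaluating a suitably normalised Kauffman polynomial (cf. \cite{Ka}), and verify that the Gram matrix of the descending tangles with respect to the induced pairing is nonsingular over $\mathbb{K}$ by a degree/specialisation argument; or, alternatively, apply the RT-functor of Section \ref{xxsec3} into $\End_{U_q(\mathfrak{sp}_{2m})}(V^{\otimes n})$ with $2m\ge n$, where these tangles have linearly independent images by the $q$-deformed classical invariant theory.

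The hard part will be exactly this last linear-independence statement, equivalently the lower bound on $\dim_{\mathbb{K}}\mathcal{D}(n,n)$: Step 1 is a finite verification against the relations of Theorem \ref{2.7}, and Step 2 is a routine normal-form argument for tangle diagrams, but bounding the size of $\mathcal{D}(n,n)$ from below genuinely requires extra input, such as a nondegenerate trace form or a sufficiently faithful evaluation functor.
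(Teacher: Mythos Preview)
The paper does not actually prove Theorem \ref{2.9}; it is stated without proof as a known result, essentially due to Kauffman \cite{Ka} on the tangle side and Birman--Wenzl \cite{BW} and Murakami \cite{Mu} on the presentation side, with the precise identification appearing in subsequent literature. So there is nothing in the paper to compare your argument against; your outline is, however, the standard route one finds in that literature: build the map on generators, verify the relations locally from Theorem \ref{2.7}, obtain surjectivity and the upper bound $(2n-1)!!$ by a skein normal-form reduction, and finish by matching dimensions via a lower bound on $\dim_{\mathbb{K}}\mathcal{D}(n,n)$.

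Two cautions on the final step. First, your alternative of invoking the RT-functor of Section \ref{xxsec3} forces the specialisation $r=-q^{2m+1}$, so you leave $\mathbb{K}=\mathbb{Q}(r,q)$ and work over $\mathbb{Q}(q)$; lifting linear independence back to generic $r$ requires an additional argument (an integral form over $\mathbb{Q}[r^{\pm1},q^{\pm1},(q-q^{-1})^{-1},\dots]$ or a Zariski-density observation in $r$). Second, and more seriously in the context of this paper, the Schur--Weyl duality statements you would invoke are themselves typically proved in the literature via the BMW algebra acting on tensor space, which already presupposes $\dim B_n(r,q)=(2n-1)!!$; within the paper's own logical flow (Theorem \ref{2.9} is used in Section \ref{xxsec3} and in the proof of Theorem \ref{3.9}) this route risks circularity. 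Your first suggestion, the Markov trace coming from the Kauffman polynomial of the closure, is the cleaner and self-contained choice, and is indeed how the result is established in \cite{Ka} and its sequels.
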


\section{The Reshetikhin-Turaev Functor}\label{xxsec3}

In this section, we present explicitly the RT-functor between the diagram category of framed tangles
and the category of tenor representations in the symplectic case \cite{RT}.
Let $m\in \mathbb{N}$, and $q$ an indeterminate. Let $U_q(\mathfrak{sp}_{2m})$ be the
quantized enveloping algebra over $\mathbb{Q}(q)$ associated to $\mathfrak{sp}_{2m}$
(see \cite{L}). Let $V$ be the natural representation of $U_q(\mathfrak{sp}_{2m})$.
Then $\dim V=2m$ equipped with a skew symmetric bilinear form $(-,-)$. We refer the
reader to \cite[Section 2]{Hu} for the explicit action of $U_q(\mathfrak{sp}_{2m})$
on the tensor space $V^{\otimes n}$.

There is a right action of the BMW algebra $B_n(-q^{2m+1},q)$, i.e. $r$ specialized to $-q^{2m+1}$,
on the tensor space $V^{\otimes n}$,
which we now recall. For each integer $i$ with
$1\leq i\leq 2m$, set $i':=2m+1-i$. We fix an ordered basis
$\{v_i\}_{i=1}^{2m}$ of $V$ such that
$$
(v_i,v_j)=0=(v_{i'},v_{j'}),\quad
(v_i,v_{j'})=\delta_{ij}=-(v_{j'},v_i),\ \ \forall 1\leq i,j\leq m.
$$
We set
$$
(\rho_1,\cdots,\rho_{2m}):=(m,m-1,\cdots,1,-1,\cdots,-m+1,-m),
$$
and $\epsilon_i:=\sign(\rho_i)$. For any $i,j\in\{1,2,\cdots,2m\}$,
we use $E_{i,j}\in\End_{\mathbb{Q}(q)}(V)$ to denote the elementary matrix whose
entries are all zero except $1$ for the $(i,j)$-th entry.
Let us define (see \cite[Section 3]{Hu} for the same notations)
$$\begin{aligned} \beta' &:=\sum_{1\leq i\leq
2m}\Bigl(qE_{i,i}\otimes E_{i,i}+q^{-1}E_{i,i'}\otimes
E_{i',i}\Bigr)+\sum_{\substack{1\leq
i,j\leq 2m\\ i\neq j,j'}} E_{i,j}\otimes E_{j,i}+\\
&\qquad\qquad (q-q^{-1})\sum_{1\leq i<j\leq 2m}\Bigl(E_{i,i}\otimes
E_{j,j}-q^{\rho_j-\rho_i}\epsilon_i\epsilon_j E_{i,j'}\otimes
E_{i',j}\Bigr),\\
\gamma' &:=\sum_{1\leq i,j\leq
2m}q^{\rho_j-\rho_i}\epsilon_i\epsilon_j E_{i,j'}\otimes E_{i',j}.
\end{aligned}
$$
Note that the operators $\beta', \gamma'$ are related to each other by
the equation
$$\beta'-(\beta')^{-1}=(q-q^{-1})(\id_{V^{\otimes
2}}-\gamma').
$$
For $i=1,2,\ldots,n-1$, we set
$$
\beta'_i:=\id_{V^{\otimes i-1}}\otimes \beta'\otimes \id_{V^{\otimes n-i-1}},\quad
\gamma'_i:=\id_{V^{\otimes i-1}}\otimes \gamma'\otimes \id_{V^{\otimes n-i-1}}.
$$
By \cite[(10.2.5)]{CP} and \cite[Section 4]{Ha}, the mapping which sends the generator
$T_i$ to $\beta'_i$ and $E_i$ to $\gamma'_i$ can be naturally extended to a right action
of $B_n(-q^{2m+1},q)$ on the tensor space $V^{\otimes n}$. This right action commutes
with the left action of $U_q(\mathfrak{sp}_{2m})$ on $V^{\otimes n}$.

In order to define the RT-functor explicitly in our case, we need the following
$\mathbb{Q}(q)$-linear maps:
$$\begin{aligned}
\check{R}: V\otimes V\longrightarrow V\otimes V,&\quad\quad v\otimes w\mapsto \beta'(v\otimes w),\\
C: \mathbb{Q}(q)\longrightarrow V\otimes V,&\quad\quad 1\mapsto \alpha:=\sum_{1\leq k\leq
2m}q^{-\rho_k}\epsilon_k v_k\otimes v_{k'},\\
E: V\otimes V\longrightarrow \mathbb{Q}(q),&\quad\quad v_i\otimes v_j\mapsto q^{-\rho_i}\epsilon_j(v_i,v_j).&
\end{aligned}
$$
Clearly $\gamma'=E\circ C$, where the composition reads from left to right,
because of the {\em right} action of $B_n(-q^{2m+1},q)$ on $V^{\otimes n}$.
By the above statements, the maps $\check{R}, C$ and $E$ are all
$U_q(\mathfrak{sp}_{2m})$-homomorphisms. They have the following properties.

\begin{lemma}\label{xx3.1}
Denote the identity map on $V$ by $\id$. Then the maps $\check{R}, C$ and $E$ satisfy the relations:
\begin{eqnarray}\label{eq3.1}
&\check{R}^2=\id^{\otimes 2}+(q-q^{-1})(\check{R}-r^{-1}E\circ C),&\\
&(\check{R}\otimes \id)\circ(\id\otimes \check{R})\circ(\check{R}\otimes \id)=
(\id\otimes \check{R})\circ (\check{R}\otimes \id)\circ (\id\otimes \check{R}),&\\
&C\circ \check{R}=r^{-1}C,&\\
&C\circ E=x,&\\
&(C\otimes \id)\circ (\id\otimes \check{R})=(\id\otimes C)\circ (\check{R}^{-1}\otimes \id),&\\
&(C\otimes \id)\circ (\id\otimes E)=\id,&
\end{eqnarray}
where $r=-q^{2m+1}$ and the composition reads from left to right.
\end{lemma}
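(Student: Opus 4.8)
The plan is to verify each of the seven relations \eqref{eq3.1}--(3.6) directly, exploiting the fact that $\check R = \beta'$, $E\circ C = \gamma'$, and that the assignment $T_i\mapsto\beta_i'$, $E_i\mapsto\gamma_i'$ already defines a representation of the BMW algebra $B_n(-q^{2m+1},q)$ on $V^{\otimes n}$, as recalled just above. Several of the relations are then immediate translations of defining relations of $B_n(r,q)$ from Definition~\ref{2.8}. Specifically, relation \eqref{eq3.1} is the image of the BMW relation $T_i - T_i^{-1} = (q-q^{-1})(1 - E_i)$ after multiplying through by $T_i$ and substituting $E\circ C$ for $\gamma'$ and $r = -q^{2m+1}$; the braid relation for $\check R$ is the image of $T_iT_{i+1}T_i = T_{i+1}T_iT_{i+1}$; and $C\circ\check R = r^{-1}C$ together with $C\circ E = x$ correspond, after precomposing/postcomposing appropriately with $C$ and $E$, to $E_iT_i = r^{-1}E_i$ and $E_i^2 = xE_i$. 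So for these I would simply point to the established $B_n(-q^{2m+1},q)$-action and read off the identities; no fresh computation is needed.

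The remaining relations (3.5) and (3.6), the ``sliding'' and ``straightening'' identities, are the substantive ones, because they mix the cap/cup maps $C,E$ with $\check R$ in a way that is not literally a BMW relation. For (3.6), $(C\otimes\id)\circ(\id\otimes E) = \id$, I would compute directly on the basis: apply the left-hand side to $v_j$, get $\sum_k q^{-\rho_k}\epsilon_k\, v_k\otimes v_{k'}\otimes v_j$ from $C\otimes\id$, then contract the last two tensor factors via $E$, which evaluates $v_{k'}\otimes v_j$ to $q^{-\rho_{k'}}\epsilon_j(v_{k'},v_j)$. Using $(v_{k'},v_j) = \delta_{k'j'}\cdot(\pm 1)$ in the chosen symplectic basis together with $\rho_{k'} = -\rho_k$ and $\epsilon_{k'} = \epsilon_k$, the sum collapses to a single term $v_j$ after the sign bookkeeping $\epsilon_k\epsilon_j(v_{k'},v_j)$ cancels correctly; this is a short but genuine check. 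Relation (3.5), $(C\otimes\id)\circ(\id\otimes\check R) = (\id\otimes C)\circ(\check R^{-1}\otimes\id)$, is the analogue of identity~(2.12) of Theorem~\ref{2.7} under the RT-functor (note $X^{op}$ there corresponds exactly to $\check R^{-1}$ rescaled, via the quadratic relation \eqref{eq3.1}); the cleanest route is to precompose both sides with $C\otimes\id\otimes\id$ and postcompose with $\id\otimes\id\otimes E$, reducing (3.5) to an identity already known to hold in the BMW image — namely a rearrangement of the hexagon/naturality of the braiding with respect to the pairing — or, failing that, to expand $\beta'$ explicitly using the formula for $\beta'$ and match coefficients.

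I expect the main obstacle to be relation (3.5): unlike the others it has no direct BMW counterpart, and a brute-force expansion of $\beta'$ (which has three distinct families of terms, including the $q^{\rho_j-\rho_i}\epsilon_i\epsilon_j E_{i,j'}\otimes E_{i',j}$ piece) against the explicit $\alpha = \sum_k q^{-\rho_k}\epsilon_k v_k\otimes v_{k'}$ produces several index sums that must be reorganized using the symmetries $i\leftrightarrow i'$, $\rho_{i'} = -\rho_i$, $\epsilon_{i'} = \epsilon_i$. The efficient alternative I would pursue first is to invoke that the RT-functor of Reshetikhin--Turaev is well-defined and monoidal (references \cite{RT,Tu,FY}), so that (3.5) is forced by the corresponding diagrammatic identity (2.12) in $\mathbb{D}(r,q)$; one then only needs to check that $\check R$, $C$, $E$ are the images of $X$, $A$, $U$ and that the specialization $r=-q^{2m+1}$ is consistent, which is exactly what the surrounding discussion sets up. If a self-contained argument is wanted, the sign computation hidden in $(v_{k'},v_j) = -(v_{j},v_{k'})$ and the interplay of $\epsilon$'s is the only delicate point, and it is the same bookkeeping that makes (3.6) work, so doing (3.6) first and reusing its normalization is the right order.
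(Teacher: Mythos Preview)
Your approach is essentially the same as the paper's: read off (3.1)--(3.2) from the BMW relations via the known action $T_i\mapsto\beta_i'$, $E_i\mapsto\gamma_i'$, and verify the remaining identities by direct computation with the explicit formulas for $\check R$, $C$, $E$. The paper in fact attributes only (3.1)--(3.2) to Definition~\ref{2.8} and says ``the remaining relations can be verified directly,'' so your finer claim that (3.3)--(3.4) also come from BMW is a slight extension; it works cleanly for (3.4), since $C\circ E$ is a scalar and $E_i^2=xE_i$ forces that scalar to be $x$, but for (3.3) the BMW relation $E_iT_i=r^{-1}E_i$ only gives $E\circ(C\circ\check R)=r^{-1}E\circ C$, and $E$ has large kernel, so you still need (3.6) or a direct check to conclude.

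One genuine caution: your ``efficient alternative'' for (3.5), invoking that the RT-functor is well-defined and monoidal, is circular here. In this paper Lemma~\ref{xx3.1} is precisely the input to Theorem~\ref{2.11}, whose proof says ``it is enough to show that the images of the generators of tangles satisfy the relations in Theorem~\ref{2.7}. This is clear from Lemma~\ref{xx3.1}.'' So you cannot appeal to the functor to establish (3.5); you must do the direct expansion you outline as a fallback. That computation is indeed routine once you use $\check R^{-1}=\check R+(q^{-1}-q)(\id^{\otimes2}-E\circ C)$ and the symmetries $\rho_{i'}=-\rho_i$, $\epsilon_{i'}=\epsilon_i$, and it is the intended route.
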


\begin{proof}
Note that the mapping which sends each
$T_i$ to $\beta'_i$ and each $E_i$ to $\gamma'_i$ can be naturally extended to a right action
of $B_n(-q^{2m+1},q)$ on the tensor space $V^{\otimes n}$. Then $\check{R}$ is invertible and
$\check{R}^{-1}=\check{R}+(q^{-1}-q)(\id^{\otimes 2}-E\circ C)$. The identities (\ref{eq3.1}-3.2)
now follow from Definition \ref{2.8} and the remaining relations can be verified directly.
\end{proof}

\begin{definition}\label{2.10}
We denote by $\mathcal{T}(V)$ the full subcategory of $U_q(\mathfrak{sp}_{2m})$-modules
with objects $V^{\otimes n}$, where $n\in\mathbb{N}$ and $V^{\otimes 0}=\mathbb{Q}(q)$
by convention. The usual tensor product of $U_q(\mathfrak{sp}_{2m})$-modules and of
$U_q(\mathfrak{sp}_{2m})$-homomorphisms is a bi-functor $\mathcal{T}(V)\times \mathcal{T}(V)
\to \mathcal{T}(V)$, which will be called the tensor product of this category. We call
$\mathcal{T}(V)$ the {\em category of tensor representations} of $U_q(\mathfrak{sp}_{2m})$.
\end{definition}

Since $V\cong V^*$ as $U_q(\mathfrak{sp}_{2m})$-modules, the category $\mathcal{T}(V)$
is also a pivotal strict monoidal category with the evaluation $E$ and the coevaluation $C$.
Under the duality decided by $E$ and $C$, $V$ and $V^*$ are the same object in the category $\mathcal{T}(V)$.
Furthermore, $\mathcal{T}(V)$ has a (non-symmetric) braiding structure given by an $R$-matrix
of $U_q(\mathfrak{sp}_{2m})$, see \cite{LR,RT}.

Let $r=-q^{2m+1}$ and $\mathbb{D}(\mathfrak{sp}_{2m}):=\mathbb{D}(r,q)$.
We have the following results.

\begin{theorem}\label{2.11}
There exists a unique additive covariant functor $F: \mathbb{D}(\mathfrak{sp}_{2m})\rightarrow
\mathcal{T}(V)$ satisfying the following properties:

\begin{enumerate}
\item[(i)] $F$ sends the object $n$ to $V^{\otimes n}$ and the morphism $D: s\to t$ to $F(D):
V^{\otimes s}\to V^{\otimes t}$, where $F(D)$ is defined on the generators of tangles by
$$\begin{aligned}
F\left(
\begin{picture}(30, 20)(0,0)
\put(15, -15){\line(0, 1){35}}
\end{picture}\right)=\id_V,
\quad\quad&
F\left(
\begin{picture}(30, 20)(0,0)
\put(5, -15){\line(1, 2){8}}
\put(25, -15){\line(-1, 2){18}}
\put(24, 21){\line(-1, -2){7}}
\end{picture}\right) = \check{R}, \\
F\left(
\begin{picture}(30, 20)(0,0)
\qbezier(5, -15)(15, 50)(25, -15)
\end{picture}\right) = C,
\quad\quad&
F\left(
\begin{picture}(30, 20)(0,0)
\qbezier(5, 20)(15, -50)(25, 20)
\end{picture}\right) = E.
\end{aligned}$$
\item[(ii)] $F$ is a pivotal monoidal functor, i.e. $F$ preserves the tensor products and the dualities.
\end{enumerate}
\end{theorem}

\begin{proof}
Since the linear maps $\check{R}, C$ and $E$ are $U_q(\mathfrak{sp}_{2m})$-homomorphisms,
by Theorem \ref{2.7} it is clear that $F$ preserves tensor products and dualities of
objects, i.e. $F(s\otimes t)=F(s)\otimes F(t)$ and $F(n^*)=F(n)=V^{\otimes n}=F(n)^*$. As
a covariant functor, $F$ preserves the composition of tangles, and by (ii) $F$ preserves
the tensor products and the dualities of morphisms.

Thanks to \cite[Theorem 5.1]{RT} we just need to check that $F$ is well-defined.
In fact, it is enough to show that the images of the generators of tangles satisfy the relations
in Theorem \ref{2.7}. This is clear from Lemma \ref{xx3.1}.
\end{proof}

Following \cite[\S 8.3]{LZ2}, the functor $F$ in Theorem \ref{2.11} is
called the {\em Reshetikhin-Turaev functor} (RT-functor for short as seen in the introduction).
Now let us end this section with a technical lemma.

\begin{lemma}\label{2.12}
Let $H(s,t):=\Hom_{U_q(\mathfrak{sp}_{2m})}(V^{\otimes s},V^{\otimes t})$ for all $s,t\in\mathbb{N}$.
Define the linear maps
$$
\mathbb{U}_s^t:=(-\otimes I_t)\circ (I_s\otimes U_t): \mathcal{D}(n,s+t)\longrightarrow \mathcal{D}(n+t,s),
$$
$$
\mathbb{A}_t^n:=(I_{n}\otimes A_t)\circ (-\otimes I_t): \mathcal{D}(n+t,s)\longrightarrow \mathcal{D}(n,s+t).
$$
Then we have:

\begin{enumerate}
\item[(i)] the $\mathbb{Q}(q)$-linear maps
$$
F\mathbb{U}_s^t:=(-\otimes \id^{\otimes t}_V)(\id^{\otimes s}_V\otimes F(U_t)): H(n,s+t)\longrightarrow H(n+t,s),
$$
$$
F\mathbb{A}_t^n:=(\id^{\otimes n}_V\otimes F(A_t))(-\otimes \id^{\otimes t}_V): H(n+t,s)\longrightarrow H(n,s+t)
$$
are well-defined and are mutually inverses of each other;

\item[(ii)] the functor $F$ induces a linear map
$$
F^s_t: \mathcal{D}(s,t)\longrightarrow H(s,t)=\Hom_{U_q(\mathfrak{sp}_{2m})}(V^{\otimes s},V^{\otimes t}),\quad
D\mapsto F(D),
$$
and the following diagrams are commutative:
\[\xymatrix@C=1.0cm{ \mathcal{D}(n+t,s)\ar[r]^{\mathbb{A}_t^n}
\ar[d]_{F_s^{n+t}}& \mathcal{D}(n,s+t)\ar[d]^{F_{s+t}^n}& \\
  H(n+t,s)\ar[r]^{F\mathbb{A}_t^n} &H(n,s+t),}
    \xymatrix@C=1.0cm{ \mathcal{D}(n,s+t)\ar[r]^{\mathbb{U}_s^t}
\ar[d]_{F_{s+t}^n}& \mathcal{D}(n+t,s)\ar[d]^{F_s^{n+t}}& \\
  H(n,s+t)\ar[r]^{F\mathbb{U}_s^t} &H(n+t,s).}\]
\end{enumerate}
\end{lemma}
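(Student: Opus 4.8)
The plan is to obtain the whole statement from two facts already in hand: first, that $F$ is an additive pivotal monoidal functor (Theorem~\ref{2.11}), so it commutes with $\circ$ and with $\otimes$, sends $I_t$ to $\id_V^{\otimes t}$, and takes every morphism of $\mathbb{D}(\mathfrak{sp}_{2m})$ to a $U_q(\mathfrak{sp}_{2m})$-homomorphism; and second, that in $\mathbb{D}(\mathfrak{sp}_{2m})$ the two ``snake'' compositions displayed just before Proposition~\ref{2.6} are the identity, i.e.\ (with $t$ in place of $n$)
\[
(A_t\otimes I_t)\circ(I_t\otimes U_t)=I_t,\qquad (I_t\otimes A_t)\circ(U_t\otimes I_t)=I_t .
\]

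Part (ii) is essentially formal and I would dispose of it first. The map $F^s_t$ is nothing but the restriction of $F$ to $\Hom_{\mathbb{D}(\mathfrak{sp}_{2m})}(s,t)=\mathcal{D}(s,t)$; it is linear because $F$ is additive, and it takes values in $H(s,t)$ because every morphism of $\mathcal{T}(V)$ is a $U_q(\mathfrak{sp}_{2m})$-homomorphism. For the two squares I would simply unravel the definitions $\mathbb{A}_t^n(D)=(I_n\otimes A_t)\circ(D\otimes I_t)$ and $\mathbb{U}_s^t(D)=(D\otimes I_t)\circ(I_s\otimes U_t)$, apply $F$, and use that $F$ preserves $\circ$ and $\otimes$: this yields
\[
F\bigl(\mathbb{A}_t^n(D)\bigr)=(\id_V^{\otimes n}\otimes F(A_t))\circ(F(D)\otimes\id_V^{\otimes t})=F\mathbb{A}_t^n\bigl(F^{n+t}_s(D)\bigr),
\]
and likewise $F(\mathbb{U}_s^t(D))=F\mathbb{U}_s^t(F^n_{s+t}(D))$, which is exactly the commutativity asserted.

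In part (i), well-definedness is immediate: $F(U_t)\colon V^{\otimes 2t}\to\mathbb{Q}(q)$ and $F(A_t)\colon\mathbb{Q}(q)\to V^{\otimes 2t}$ are morphisms of $\mathcal{T}(V)$, hence $U_q(\mathfrak{sp}_{2m})$-homomorphisms, and the class of such homomorphisms is stable under tensoring with identity maps and under composition, so $F\mathbb{U}_s^t$ maps $H(n,s+t)$ into $H(n+t,s)$ and $F\mathbb{A}_t^n$ maps $H(n+t,s)$ into $H(n,s+t)$. The real content is that these two maps are mutually inverse. Here I would apply $F$ to the two snake identities above; since $F$ preserves $\circ$, $\otimes$ and sends $I_t$ to $\id_V^{\otimes t}$, they become the rigidity (``zig-zag'') relations
\[
(F(A_t)\otimes\id_V^{\otimes t})\circ(\id_V^{\otimes t}\otimes F(U_t))=\id_V^{\otimes t}=(\id_V^{\otimes t}\otimes F(A_t))\circ(F(U_t)\otimes\id_V^{\otimes t})
\]
witnessing the self-duality of $V^{\otimes t}$ with evaluation $F(U_t)$ and coevaluation $F(A_t)$. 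I would then expand $F\mathbb{A}_t^n\circ F\mathbb{U}_s^t$ on an element $\phi\in H(n,s+t)$, rewrite it by the interchange law in the form $(\phi\otimes F(A_t))\circ(\id_V^{\otimes s}\otimes F(U_t)\otimes\id_V^{\otimes t})$, and cancel the zig-zag block acting on the last $t$ tensor factors to recover $\phi$; the computation of $F\mathbb{U}_s^t\circ F\mathbb{A}_t^n$ is entirely symmetric and uses the other zig-zag. The main obstacle --- really the only thing needing care --- is the bookkeeping in this last step: one must track exactly which tensor factors the cup $F(U_t)$ and cap $F(A_t)$ act on so that the interchange law is applied correctly, after which the cancellation is precisely the ``straightening'' move in a pivotal category and the lemma follows.
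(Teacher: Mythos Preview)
Your proposal is correct and follows essentially the same route as the paper: both arguments reduce (i) to the zig-zag (snake) identities together with the interchange law, and obtain (ii) immediately from the fact that $F$ preserves $\circ$ and $\otimes$. The only cosmetic difference is that the paper first carries out the inverse computation for $\mathbb{A}_t^n$ and $\mathbb{U}_s^t$ inside $\mathcal{D}$ and then invokes Theorem~\ref{2.11} to transport it to $\mathcal{T}(V)$, whereas you work directly in $H(s,t)$; since the formal manipulation is identical in any pivotal monoidal category, the two presentations are interchangeable.
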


\begin{proof}
First of all, we show that $\mathbb{A}_t^n$ and $\mathbb{U}_s^t$
are mutually inverses of each other. Let $D\in \mathcal{D}(n,s+t)$. Then we have
\begin{eqnarray*}
\mathbb{A}_t^n\mathbb{U}_s^t(D)&=&(I_n\otimes A_t)\circ(D\otimes I_t\otimes I_t)\circ(I_s\otimes U_t\otimes I_t)\\
&=&D\circ(I_s\otimes I_t\otimes A_t)\circ (I_s\otimes U_t\otimes I_t)\\
&=&D\circ (I_s\otimes I_t)\\
&=&D.
\end{eqnarray*}
Hence $\mathbb{A}_t^n\mathbb{U}_s^t$ is the identity morphism of $\mathcal{D}(n,s+t)$.
Similarly one can show that $\mathbb{U}_s^t\mathbb{A}_t^n$ is the identity morphism
of $\mathcal{D}(n+t,s)$ for each $n\in \mathbb{N}$.
Now the Claim (i) follows from Theorem \ref{2.11}.

Since the RT-functor $F$ preserves both the composition and the tensor product of tangles,
it is obvious that $F(\mathbb{A}_t^n(D))=F\mathbb{A}_t^n(F(D))$ for any tangle $D\in \mathcal{D}(n+t,s)$.
Hence we obtain the commutativity of the first diagram. Similarly, we have the second one.
\end{proof}

\section{Fundamental Theorems for Quantized Symplectic Group}\label{xxsec4}

In this section we present a categorical version of the fundamental theorems of invariants
for the quantized symplectic group. For each positive integer $n$ we introduce the quantum integer
$$
[n]:=\frac{q^n-q^{-n}}{q-q^{-1}},\quad\text{and}\quad [n]!:=[n][n-1]\cdots [1],
$$
with convention $[0]:=1$.

\begin{definition}\label{3.1}
For $k\in \mathbb{N}$ and $1\leq i<n$ define $Y_i(k)\in \mathcal{D}(n,n)=B_n(r,q)$ by
$$
Y_i(k):=\frac{-1}{[k+1]}\left([k]T_i-q^k+\frac{q^k-q^{-k}}{1+rq^{-2k+1}}E_i  \right).
$$
\end{definition}

\begin{proposition}\label{3.2}
The elements satisfy the following relations:
$$
\begin{aligned}
Y_i(k)Y_{i+1}(k+h)Y_i(h)&=Y_{i+1}(h)Y_i(k+h)Y_{i+1}(k),\\
Y_i(k)Y_j(h)&=Y_j(h)Y_i(k)\quad \text{for}\ |i-j|>1.
\end{aligned}
$$
\end{proposition}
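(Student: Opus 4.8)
The plan is to prove the braid-type relation $Y_i(k)Y_{i+1}(k+h)Y_i(h)=Y_{i+1}(h)Y_i(k+h)Y_{i+1}(k)$ entirely inside the BMW algebra $B_n(r,q)$, reducing to the case $n=3$ with $i=1$ (the far-commutation relation $Y_i(k)Y_j(h)=Y_j(h)Y_i(k)$ for $|i-j|>1$ is immediate, since then $T_i,E_i$ commute with $T_j,E_j$ by Definition~\ref{2.8}, hence so do the polynomials $Y_i(k),Y_j(h)$ in them). First I would expand each $Y_i(k)$ using Definition~\ref{3.1} as a $\mathbb{K}$-linear combination of $T_i$, the identity $1$, and $E_i$; thus each side of the cubic identity becomes a sum of $27$ products of the form $Z_1Z_2Z_3$ with each $Z_j\in\{T_1,T_2,1,E_1,E_2\}$ (respecting the alternating index pattern $1,2,1$ on the left and $2,1,2$ on the right), with coefficients that are rational functions in $q,r,q^k,q^h$. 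The claim is then a finite set of scalar identities obtained by matching the coefficient of each reduced word in $B_3(r,q)$.

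The key structural input is that $B_3(r,q)$ has a known finite $\mathbb{K}$-basis (the Kauffman tangle / BMW basis in three strands), and the products of the small generators $T_i^{\pm1},E_i$ appearing above can each be rewritten in that basis using the relations of Definition~\ref{2.8}: in particular $T_iT_{i+1}T_i=T_{i+1}T_iT_{i+1}$ (the ordinary braid relation, which handles the pure-$T$ term), $E_iE_{i+1}E_i=E_i$ and $E_{i+1}E_iE_{i+1}=E_{i+1}$, the mixed relations $T_iT_{i+1}E_i=E_{i+1}E_i$, $T_{i+1}T_iE_{i+1}=E_iE_{i+1}$, $E_iT_{i+1}E_i=rE_i$, $E_iT_i=T_iE_i=r^{-1}E_i$, and $E_i^2=xE_i$. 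Using these (together with their consequences such as $E_iT_{i+1}^{-1}E_i=r^{-1}E_i$ obtained by substituting $T_{i+1}=T_{i+1}^{-1}+(q-q^{-1})(1-E_{i+1})$) every one of the $27+27$ terms on the two sides can be pushed into the standard basis, after which one compares coefficients. The bookkeeping is routine but heavy; I would organize it by the ``shape'' of the word (how many $E$'s it contains and where), since the relations above respect that stratification reasonably well.

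The cleanest route, and the one I would actually present, is to avoid the brute-force expansion by observing that $Y_i(k)$ is (up to normalization) the image of a Jimbo-type idempotent-weighted combination: indeed the three eigenvalues of $T_i$ on $V\otimes V$ are $q$, $-q^{-1}$, $r^{-1}$, and $Y_i(k)$ is built so that $Y_i(k)=\lambda_+(k)\,p_+ + \lambda_0(k)\,p_0 + \lambda_-(k)\,p_-$ for suitable scalars $\lambda_\bullet(k)$ depending on $k$, where $p_+,p_0,p_-$ are the three spectral projections of $T_i$ (expressible via $T_i,E_i,1$). One then checks the braid relation for these ``spectral-parameter'' operators by the standard Yang--Baxter-with-parameters computation: it suffices to verify the relation on each $U_q(\mathfrak{sp}_{2m})$-isotypic component of $V^{\otimes 3}$, equivalently on each common eigenline of the relevant commuting operators, which reduces the whole identity to a short list of scalar identities among $\lambda_\bullet(k),\lambda_\bullet(h),\lambda_\bullet(k+h)$ of the form $\lambda_a(k)\lambda_b(k+h)\lambda_c(h)=\lambda_{a'}(h)\lambda_{b'}(k+h)\lambda_{c'}(k)$ dictated by the $6j$-symbols of the three-dimensional Hecke/BMW quotient. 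These scalar identities are the familiar ones underlying the affine BMW / Baxterized $R$-matrix picture, and checking them is a direct algebraic manipulation of the explicit $\lambda_\bullet$.

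The main obstacle is the last step done honestly: controlling the $E$-heavy terms, i.e. the interaction of the spectral projection $p_-$ (the ``$E$-direction'') across strands, because $E_1$ and $E_2$ do not commute and $E_1E_2E_1=E_1$ forces a genuine non-Hecke computation where the denominators $[k+1]$, $1+rq^{-2k+1}$ must conspire to cancel; this is exactly where the specific coefficient $\dfrac{q^k-q^{-k}}{1+rq^{-2k+1}}$ in Definition~\ref{3.1} is pinned down, and verifying the cancellation is the crux of the proof. Everything else — the pure braid relation for the $T$-parts, and the far-commutation — is formal.
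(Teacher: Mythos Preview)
Your primary plan --- reduce to $i=1$ in $B_3(r,q)$ and verify the cubic relation by expanding each $Y_i(k)$ in terms of $1,T_i,E_i$ and simplifying with the BMW relations --- is exactly the paper's proof, which says only that the identity ``can be verified via a direct but complicated calculation in the $15$-dimensional algebra $B_3(r,q)$''; your description of the bookkeeping is in fact considerably more detailed than what the paper records.

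One caution regarding your alternative ``cleaner route''. You propose to check the identity on $U_q(\mathfrak{sp}_{2m})$-isotypic components of $V^{\otimes 3}$, but Proposition~\ref{3.2} is an identity in $B_n(r,q)$ over $\mathbb{K}=\mathbb{Q}(r,q)$ with $r$ an indeterminate, whereas $V$ exists only after the specialization $r=-q^{2m+1}$, and the resulting map $B_3(-q^{2m+1},q)\to\End(V^{\otimes 3})$ is not faithful in general. So an isotypic-component check on $V^{\otimes 3}$ does not by itself prove the abstract relation. The fix is either (a) to replace $V^{\otimes 3}$ by the abstract Wedderburn decomposition of the semisimple $\mathbb{K}$-algebra $B_3(r,q)$ and run the spectral argument there, or (b) to prove the identity for infinitely many specializations $r=-q^{2m+1}$ and then conclude the generic identity by a Zariski-density / polynomial-identity argument; either way this extra step must be made explicit.
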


\begin{proof}
The second identity is clear. For the first relation, we can assume $i=1$
without lose of the generality. Using the relations of
BMW algebra and Theorem \ref{2.9}, it can be verified via a direct but complicated calculation in
the $15$-dimensional algebra $B_3(r,q)$
\end{proof}

Let $\mathfrak{S}_n$ be the symmetric group on $n$ words with the standard Coxter generators $s_1, s_2,
\ldots, s_{n-1}$. To a reduced expression $s_{i_1}s_{i_2}\cdots s_{i_l}$, we associate an element
$Y(s_{i_1}s_{i_2}\cdots s_{i_l})\in B_n(r,q)$ of the form:
$$
Y(s_{i_1}s_{i_2}\cdots s_{i_l}):=Y_{i_1}(k_1)Y_{i_2}(k_2)\cdots Y_{i_l}(k_l),
$$
where the integers $k_1,\ldots,k_l$ are determined as follows. Firstly we identify
each element of $\mathfrak{S}_n$ with an $n$-string diagram as usual and label the
vertices of an $n$-string diagram in the top and bottom row by the indices $1,2,\ldots,n$
from left to right. Under this identification, we draw the $n$-string diagram of the
reduced expression $s_{i_1}s_{i_2}\cdots s_{i_l}$ (from up to down) such that each factor $s_{i_j}$,
$1\leq j\leq l$, corresponds to an individual $n$-string diagram and to a crossing of two adjacent
strings. Since the word is reduced, any two strings can cross at most once. We label
the crossings by their corresponding position in the reduced expression, and label each
string with the number of its starting vertex on the most top of the diagram. Then for
the $j$-th crossing, if the numbers of the two strings are $a_j$ and $b_j$ with $a_j<b_j$,
we set $k_j=b_j-a_j$.

The element $Y(s_{i_1}s_{i_2}\cdots s_{i_l})$ will be called the {\em Yang-Baxter element}
associated to the reduced expression $s_{i_1}s_{i_2}\cdots s_{i_l}$.

\begin{example}\label{3.3}
The $5$-string diagram corresponding to the reduced expression $s_3s_2s_1s_3s_4$ is
\begin{center}
\epsfig{figure=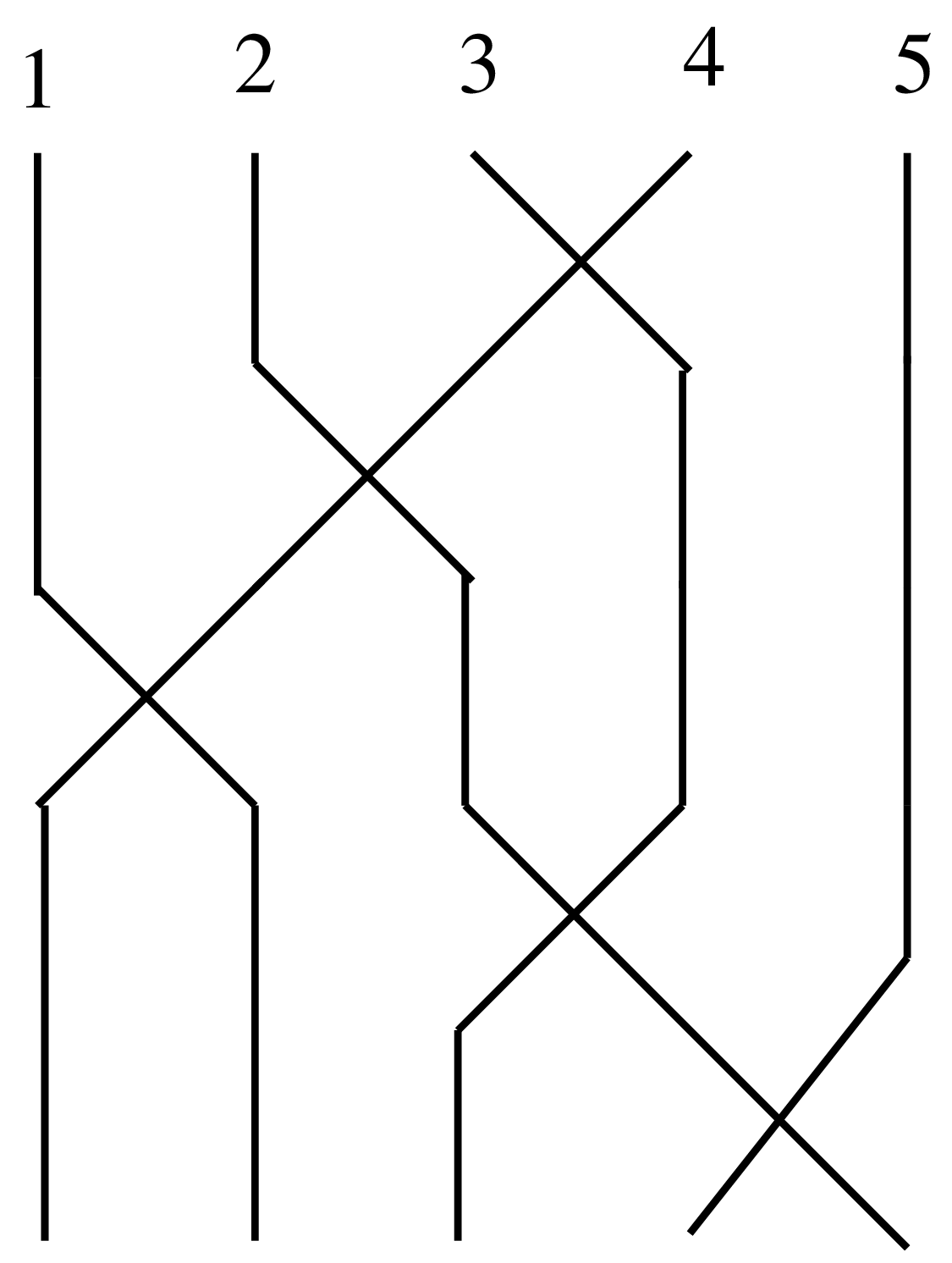,clip,height=3.5cm}
\end{center}
and hence the Yang-Baxter element associated to it is
$$
Y(s_3s_2s_1s_3s_4)=Y_3(1)Y_2(2)Y_1(3)Y_3(1)Y_4(3).
$$
\end{example}

\begin{lemma}\label{3.4}
If two reduced expressions represent the same permutation, then the
associated Yang-Baxter elements in $B_n(r,q)$ are the same.
\end{lemma}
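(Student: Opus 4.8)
The plan is to reduce the statement to Proposition~\ref{3.2} by invoking the classical theorem of Matsumoto and Tits: any two reduced expressions for the same element of $\mathfrak{S}_n$ are connected by a finite chain of \emph{braid moves}, where a braid move replaces a consecutive subword $s_is_{i+1}s_i$ by $s_{i+1}s_is_{i+1}$, or a consecutive subword $s_is_j$ by $s_js_i$ with $|i-j|>1$. Thus it suffices to show that applying a single such move to a reduced expression leaves the associated Yang--Baxter element unchanged.

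First I would note that a braid move is strongly local. Write the reduced expression as $w=u\,\sigma\,v$, where $\sigma$ is the short subword being altered into $\sigma'$. The $n$-string diagrams of $u\sigma v$ and $u\sigma' v$ agree outside the vertical band occupied by $\sigma$; moreover, since $u\sigma$ and $u\sigma'$ represent the same permutation, the strings enter the lower portion $v$ in the same positions in both diagrams. Hence every crossing arising from $u$ or from $v$ involves the same pair of strings, and therefore the same integer $k_j$, in both expressions. Consequently the two Yang--Baxter elements differ only in the short product (of two or three consecutive factors $Y_{i_j}(k_j)$) coming from $\sigma$, and I only need to match that product with one of the two identities of Proposition~\ref{3.2}.

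For the commuting move $s_is_j\leftrightarrow s_js_i$ with $|i-j|>1$, the two crossings act on the disjoint pairs of positions $\{i,i+1\}$ and $\{j,j+1\}$, so interchanging their order changes neither the strings at these positions nor the associated parameters $k,h$; the required identity $Y_i(k)Y_j(h)=Y_j(h)Y_i(k)$ is exactly the second relation of Proposition~\ref{3.2}. The braid move $s_is_{i+1}s_i\leftrightarrow s_{i+1}s_is_{i+1}$ is the substantive case. Let $a,b,c$ be the labels (i.e.\ starting vertices) of the three strings occupying positions $i,i+1,i+2$, from left to right, at the moment we reach $\sigma$. Inside $\sigma$ each of the pairs $\{a,b\},\{a,c\},\{b,c\}$ crosses exactly once; since $w$ is reduced, each of these pairs crosses \emph{only} there, so none of them has crossed above $\sigma$. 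Therefore at that moment the left-to-right order of the three strings still agrees with their order at the top of the diagram, which forces $a<b<c$. Putting $k:=b-a$ and $h:=c-b$, one computes that $s_is_{i+1}s_i$ contributes $Y_i(b-a)\,Y_{i+1}(c-a)\,Y_i(c-b)=Y_i(k)\,Y_{i+1}(k+h)\,Y_i(h)$ and that $s_{i+1}s_is_{i+1}$ contributes $Y_{i+1}(c-b)\,Y_i(c-a)\,Y_{i+1}(b-a)=Y_{i+1}(h)\,Y_i(k+h)\,Y_{i+1}(k)$, and these are equal by the first relation of Proposition~\ref{3.2}.

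The main point, and the only place reducedness is genuinely used, is the order observation in the braid case: the parameter of the middle factor is forced to equal the sum of the parameters of the two outer factors precisely because the three strings enter the braid region with increasing starting vertices $a<b<c$. Once this is established the lemma follows immediately; for a non-reduced expression the middle string need not be numerically between the outer two, and Proposition~\ref{3.2} would not apply directly.
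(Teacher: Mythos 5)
Your proof is correct and follows essentially the same route as the paper's: reduce to a single braid move via Matsumoto's theorem, observe that reducedness forces the three strings entering the braid region to appear in increasing order $a<b<c$, and then match the contributed subword $Y_i(b-a)Y_{i+1}(c-a)Y_i(c-b)$ against $Y_{i+1}(c-b)Y_i(c-a)Y_{i+1}(b-a)$ using Proposition~\ref{3.2}. Your write-up is somewhat more explicit than the paper's (spelling out the locality of braid moves and the reason $a<b<c$ holds), but there is no substantive difference in method.
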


\begin{proof}
It follows from the Matsumoto's theorem \cite{Ma} that two reduced expressions represent
the same permutation if and only if they are related by a finite sequence of braid moves
of the form $s_is_{i+1}s_i\rightarrow s_{i+1}s_is_{i+1}$, $s_{i+1}s_is_{i+1}\rightarrow
s_is_{i+1}s_i$, and $s_is_j=s_js_i$ for $|i-j|>1$. Let us consider the braid move
$s_is_{i+1}s_i\rightarrow s_{i+1}s_is_{i+1}$ and the other two types of braid moves can
be proved similarly. Let $w=w_1s_is_{i+1}s_iw_2$ be a reduced expression with subword
$s_is_{i+1}s_i$ for some $1\leq i<n$. Assume $(i)w^{-1}_1=a$, $(i+1)w^{-1}_1=b$ and
$(i+2)w^{-1}_1=c$. Then $a<b<c$ since $w$ and $w_1$ are reduced. Hence the Yang-Baxter element
associated to $w_1s_is_{i+1}s_iw_2$ contains subword $Y_i(b-a)Y_{i+1}(c-a)Y_i(c-b)$ corresponding
to the subword $s_is_{i+1}s_i$. On the other hand, the Yang-Baxter element
associated to $w_1s_{i+1}s_is_{i+1}w_2$ contains subword $Y_{i+1}(c-b)Y_i(c-a)Y_{i+1}(b-a)$
corresponding to the subword $s_{i+1}s_is_{i+1}$. Now Proposition \ref{3.2}
completes the proof of this lemma.
\end{proof}

Lemma \ref{3.4} tells us that the Yang-Baxter element can be defined for each permutation,
independent of its explicit reduced expressions.
Let $Y_n\in B_n(r,q)$ be the Yang-Baxter element associated to the longest
length permutation in $\mathfrak{S}_n$.

We will show that the element $Y_n$ provides the one-dimensional sign representation
$\rho$ of the BMW algebra $B_n(r,q)$ \cite[Section 3]{HuXiao}, which is defined on
generators by
$$
\rho(T_i)=-q^{-1},\quad \rho(E_i)=0.
$$

\begin{lemma}\label{3.6}
The Yang-Baxter element $Y_n$ satisfies
$$
bY_n=\rho(b)Y_n=Y_nb,
$$
for any $b\in B_n(r,q)$. Furthermore $Y_n$ is a central idempotent.
\end{lemma}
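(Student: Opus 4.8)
The plan is to establish the three‐part statement in stages: first the relation $T_iY_n = \rho(T_i)Y_n = -q^{-1}Y_n$ (and symmetrically on the right), then extend it to $E_iY_n = 0$, then bootstrap from these generators to arbitrary $b\in B_n(r,q)$, and finally deduce idempotency and centrality. The natural device is the recursive structure of the longest element. Write $w_0 = s_1(s_2s_1)(s_3s_2s_1)\cdots(s_{n-1}\cdots s_1)$, so that $\ell(s_iw_0) = \ell(w_0)-1$ for every $i$, i.e. $s_i$ can be brought to the front of a reduced word for $w_0$. By Lemma~\ref{3.4} the Yang-Baxter element $Y_n$ is independent of the reduced expression, so we may choose, for each fixed $i$, a reduced word beginning with $s_i$; then $Y_n = Y_i(k)\,Y(w')$ where $w_0 = s_iw'$ and the label $k$ attached to that first crossing is determined by the string-diagram recipe. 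The key computational fact is that for this specific crossing the two strings that cross are the ones carrying a gap $k$ with $rq^{-2k+1}$ behaving well — more precisely, one checks from Definition~\ref{3.1} that whenever $Y_i(k)$ sits at the very front of the diagram for $w_0$ (so the crossing joins strings $a,b$ with $b-a=k$ and this is the leftmost crossing on those strings), left-multiplication by $T_i$ collapses the bracket: $T_iY_i(k) = -q^{-1}Y_i(k)$ and $E_iY_i(k)=0$, using only the BMW relations $T_i^2 = 1+(q-q^{-1})(T_i-E_i)$, $E_iT_i = r^{-1}E_i$, and $E_i^2 = xE_i$ together with the explicit coefficient $\tfrac{q^k-q^{-k}}{1+rq^{-2k+1}}$. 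This is the heart of the argument and the step I expect to be the main obstacle: one must verify the algebraic identity
\[
T_i\Bigl([k]T_i - q^k + \tfrac{q^k-q^{-k}}{1+rq^{-2k+1}}E_i\Bigr) = -q^{-1}\Bigl([k]T_i - q^k + \tfrac{q^k-q^{-k}}{1+rq^{-2k+1}}E_i\Bigr)
\]
in $B_n(r,q)$, and separately that $E_i$ annihilates the same expression; both reduce, after substituting $T_i^2$ and $E_iT_i$, to rational-function identities in $r,q,[k]$ which are routine but must be checked. Granting this, $T_iY_n = (T_iY_i(k))Y(w') = -q^{-1}Y_n = \rho(T_i)Y_n$ and $E_iY_n = (E_iY_i(k))Y(w') = 0 = \rho(E_i)Y_n$; the right-hand versions follow from the analogous decomposition $Y_n = Y(w'')Y_i(k)$ using $\ell(w_0s_i)<\ell(w_0)$ together with the $\ast$-symmetry of the whole setup (the anti-automorphism of $B_n(r,q)$ fixing each $T_i,E_i$ sends $Y_n$ to $Y_n$).

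With the generator case in hand, the general identity $bY_n = \rho(b)Y_n$ follows by induction on word length in the $T_i,E_i$: writing $b = b'g$ with $g$ a generator, $bY_n = b'(gY_n) = b'\rho(g)Y_n = \rho(g)(b'Y_n) = \rho(g)\rho(b')Y_n = \rho(b'g)Y_n$, where one uses that $\rho$ is an algebra homomorphism; the right-handed statement $Y_nb = \rho(b)Y_n$ is symmetric. Combining both gives $bY_n = Y_nb$ for all $b$, so $Y_n$ is central. Finally, idempotency: $Y_n$ is itself a (scalar multiple of a) product of $T_i$'s and $E_i$'s, hence writing $Y_n = \sum_j c_j w_j$ as a linear combination of monomials in the generators and applying $Y_n\cdot(-)$ termwise gives $Y_n^2 = \sum_j c_j Y_n w_j = \sum_j c_j \rho(w_j)Y_n = \rho(Y_n)Y_n$, so it suffices to know $\rho(Y_n)=1$; but $\rho$ kills every $E_i$ and sends each $Y_i(k)$ to $\rho(Y_i(k)) = \tfrac{-1}{[k+1]}([k](-q^{-1}) - q^k) = \tfrac{-1}{[k+1]}\cdot\tfrac{-(q^{k+1}-q^{-k-1})}{q-q^{-1}} = 1$ by a one-line computation, whence $\rho(Y_n) = \prod_j \rho(Y_{i_j}(k_j)) = 1$ and $Y_n^2 = Y_n$.

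I would present the proof in exactly this order: (1) reduce to showing $T_iY_i(k) = -q^{-1}Y_i(k)$ and $E_iY_i(k)=0$ whenever $Y_i(k)$ heads a reduced-word decomposition of $Y_n$, invoking Lemma~\ref{3.4} to justify the choice of reduced word; (2) carry out that BMW-algebra verification (the technical core); (3) conclude $gY_n = \rho(g)Y_n = Y_ng$ for generators $g$, using $\ast$-symmetry for the right-hand and $E_i$ statements; (4) induct on length for general $b$, yielding centrality; (5) compute $\rho(Y_i(k))=1$, hence $\rho(Y_n)=1$, hence $Y_n^2=Y_n$. The only real risk is in step (2): if the coefficient $\tfrac{q^k-q^{-k}}{1+rq^{-2k+1}}$ in Definition~\ref{3.1} does not make the collapse exact for the particular crossing label $k$ that arises at the front of $w_0$, one would instead need to track how the labels propagate and argue inductively on $n$ — peeling off the block $s_{n-1}s_{n-2}\cdots s_1$ (which carries $Y_{n-1}(1)\cdots$ acting on the new strand) and reducing to the $B_{n-1}$ statement — but the bracketed form of $Y_i(k)$ is visibly engineered so that step (2) goes through directly, so I expect the clean version to suffice.
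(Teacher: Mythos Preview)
Your overall architecture matches the paper's proof exactly: choose a reduced word for $w_0$ beginning with $s_i$, show the leading factor absorbs $T_i$ and $E_i$ with eigenvalues $-q^{-1}$ and $0$, extend to all $b$ by multiplicativity of $\rho$, and deduce idempotency from $\rho(Y_i(k))=1$. The one genuine oversight is in your step~(2). You treat the leading label $k$ as an unknown to be determined by ``the string-diagram recipe'' and hope the coefficient $\tfrac{q^k-q^{-k}}{1+rq^{-2k+1}}$ is engineered to make $T_iY_i(k)=-q^{-1}Y_i(k)$ collapse for that $k$. In fact the identity $T_iY_i(k)=-q^{-1}Y_i(k)$ is \emph{false} for $k\geq 2$: comparing constant terms after substituting $T_i^2=1+(q-q^{-1})T_i-(q-q^{-1})r^{-1}E_i$ forces $[k]=q^{k-1}$, which holds only for $k=1$. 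The point you are missing, and which the paper uses, is purely combinatorial: when $s_i$ is the \emph{first} letter of a reduced word, the two strings that cross are the ones emanating from positions $i$ and $i+1$ at the top, so the label is $k=(i+1)-i=1$ automatically. With $k=1$ the verification in step~(2) is the short computation the paper gives (it checks $T_jY_j(1)=-q^{-1}Y_j(1)$ and $E_jY_j(1)=0$ directly), and no ``tracking how the labels propagate'' is needed. Once you insert this observation, your proof is complete and coincides with the paper's; your idempotency argument via $\rho(Y_n)=1$ is the same computation as the paper's $Y_j(k)Y_n=Y_n$, just rephrased.
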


\begin{proof}
For each $1\leq j<n$ there exists a reduced expression for the longest
length permutation such that the reduced expression begins with $s_j$.
Then $Y_n$ has left factor $Y_j(1)$. We have
$$\begin{aligned}
-[2]T_jY_j(1)&=T_j\left(T_j-q+\frac{q-q^{-1}}{1+rq^{-1}}E_j  \right)\\
&=1+(q-q^{-1})(T_j-E_jT_j)-qT_j+\frac{q-q^{-1}}{1+rq^{-1}}T_jE_j\\
&=1-q^{-1}T_j+(q-q^{-1})\frac{-q^{-1}}{1+rq^{-1}}E_j\\
&=(-q)^{-1}(-[2]Y_j(1)).
\end{aligned}$$
A similar calculation shows $E_jY_j(1)=0$, and hence $bY_n=\rho(b)Y_n$.
The equation $\rho(b)Y_n=Y_nb$ for any $b\in B_n(r,q)$ can be proved
similarly. Thus $Y_j(k)Y_n=Y_n$ for any $k\in \mathbb{N}$ and this
yields $Y_n$ is a central idempotent.
\end{proof}

\begin{proposition}\label{3.7}
Let $r=-q^{2m+1}$. We have $F(Y_{m+1})=0$.
\end{proposition}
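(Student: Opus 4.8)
The plan is to regard $F(Y_{m+1})$ as an idempotent $U_q(\mathfrak{sp}_{2m})$-endomorphism of $V^{\otimes(m+1)}$ and to show that its image vanishes. First I would check that $Y_{m+1}$ still makes sense after the specialization $r=-q^{2m+1}$: the only denominators occurring in the factors $Y_i(k)$ of $Y_{m+1}$ are $[k+1]$ and $1+rq^{-2k+1}=1-q^{2(m-k+1)}$, and among the crossings of the longest element of $\mathfrak{S}_{m+1}$ only values $1\le k\le m$ appear, so both are nonzero in $\mathbb{K}$. Restricting $F$ to $\mathcal{D}(m+1,m+1)\cong B_{m+1}(-q^{2m+1},q)$ intertwines the BMW action with the operators $\beta'_i,\gamma'_i$ on $V^{\otimes(m+1)}$ from Section~\ref{xxsec3}. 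Since $Y_{m+1}$ is idempotent by Lemma~\ref{3.6}, so is $F(Y_{m+1})$, and it suffices to prove that $W:=F(Y_{m+1})(V^{\otimes(m+1)})$ is zero.

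By Lemma~\ref{3.6} we have $T_iY_{m+1}=\rho(T_i)Y_{m+1}=-q^{-1}Y_{m+1}$ and $E_iY_{m+1}=0$ in $B_{m+1}(-q^{2m+1},q)$; transporting this through $F$ shows that $\beta'_i$ acts on $W$ as $-q^{-1}\id$ and $\gamma'_i$ as $0$, for $1\le i\le m$. Hence $W$ is a $U_q(\mathfrak{sp}_{2m})$-submodule contained in $A:=\{v\in V^{\otimes(m+1)}\mid\beta'_i v=-q^{-1}v\ \text{for all }i\}$; conversely, the relation $\beta'_i-(\beta'_i)^{-1}=(q-q^{-1})(\id-\gamma'_i)$ forces every $\gamma'_i$ to vanish on $A$, and a one-line computation gives $\rho(Y_i(k))=1$ for all $i,k$ (using $q^{-1}[k]+q^k=[k+1]$), so $\rho(Y_{m+1})=1$ and $F(Y_{m+1})$ restricts to the identity on $A$. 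Thus in fact $W=A$, and the whole problem reduces to proving $A=0$.

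On $A$ all $\gamma'_i$ vanish and all $\beta'_i$ act by $-q^{-1}$, so the $B_{m+1}(-q^{2m+1},q)$-action restricts on $A$ to the one-dimensional sign representation $\rho$; that is, $A$ lies in the $\rho$-isotypic part of $V^{\otimes(m+1)}$ as a $B_{m+1}(-q^{2m+1},q)$-module. I would then invoke quantum Schur--Weyl duality for $U_q(\mathfrak{sp}_{2m})$ on $V^{\otimes(m+1)}$ (see \cite{CP,Ha,Hu}): as a bimodule over $U_q(\mathfrak{sp}_{2m})$ and $B_{m+1}(-q^{2m+1},q)$, $V^{\otimes(m+1)}$ is a direct sum of $L_q(\lambda)\otimes D_{m+1}(\lambda)$ with $\lambda$ running over the partitions of $m+1,m-1,m-3,\dots$ having at most $m$ rows, where the sign module $\rho$ corresponds to the column $\lambda=(1^{m+1})$. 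Since $(1^{m+1})$ has $m+1>m$ rows it does not occur in this list, so the $\rho$-isotypic part is zero; therefore $A=W=0$ and $F(Y_{m+1})=0$.

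The genuine obstacle is the last step: one must match the sign representation of $B_{m+1}(-q^{2m+1},q)$ with the column partition $(1^{m+1})$ under Schur--Weyl duality and verify that, after the specialization $r=-q^{2m+1}$, the would-be module $L_q((1^{m+1}))$ really vanishes because $\mathfrak{sp}_{2m}$ has rank $m$. If one prefers not to quote the full bimodule decomposition, the same conclusion can be reached by a weight argument: a highest weight vector of a hypothetical nonzero $A$ would have weight a partition of $m+1$ with at most $m$ parts (dominance, together with the vanishing of all $\gamma'_i$ forcing the weight to be a partition of $m+1$ rather than of $m-1,m-3,\dots$), whereas $\beta'_i v=-q^{-1}v$ forces that partition to be the column $(1^{m+1})$, a contradiction. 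Either way the essential input is the representation theory of the quantized symplectic group, which cannot be bypassed.
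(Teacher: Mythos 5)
Your argument is correct in spirit but takes a genuinely different route from the paper's. You reduce the claim to the vanishing of the sign-isotypic part $A\subseteq V^{\otimes(m+1)}$ and then invoke the type~$C$ quantum Schur--Weyl bimodule decomposition, observing that the row bound $\ell(\lambda)\le m$ excludes $(1^{m+1})$. The paper stays entirely inside the diagram category: since the RT-functor $F$ is pivotal it preserves the categorical trace, and the trace of an idempotent is its rank, so it suffices to check $\tr_{m+1}(Y_{m+1})=0$. This is done with nothing more than the Markov-trace identities $\tr_{n+1}(D\otimes I)=x\tr_n(D)$, $\tr_{n+1}\bigl((D\otimes I)\circ T_n\circ(D'\otimes I)\bigr)=r\,\tr_n(DD')$, $\tr_{n+1}\bigl((D\otimes I)\circ E_n\circ(D'\otimes I)\bigr)=\tr_n(DD')$, applied to the recursion $Y_{m+1}=Y_1(1)\cdots Y_m(m)Y_m$: the scalar $[m]r-q^mx+\frac{q^m-q^{-m}}{1+rq^{-2m+1}}$ appears and vanishes exactly at $r=-q^{2m+1}$. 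So the paper's proof is elementary and self-contained, consistent with its categorical theme; yours imports the bimodule decomposition (from \cite{CP,Ha,Hu}), which is heavier but makes the representation-theoretic meaning of $Y_{m+1}$ visible.

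One gap to close if you want your version to be airtight: excluding $(1^{m+1})$ from the index set does not by itself rule out the possibility that some other allowed $D_{m+1}(\lambda)$ is abstractly isomorphic to $\rho$. You should observe that $\rho$ is one-dimensional with $\rho(E_i)=0$, so any $\lambda$ with $D_{m+1}(\lambda)\cong\rho$ must satisfy $|\lambda|=m+1$ (the simples with $|\lambda|<m+1$ are not annihilated by the $E_i$) and must have exactly one standard tableau, leaving only $(m+1)$ and $(1^{m+1})$; the former gives $T_i\mapsto q\ne -q^{-1}$, the latter is excluded by the row bound. The alternative ``weight argument'' you sketch at the end gestures at the same facts but, as phrased, does not actually explain why vanishing of $\gamma'_i$ on a highest-weight vector forces $|\lambda|=m+1$, so it would need comparable elaboration.
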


\begin{proof}
It follows from Theorem \ref{2.11} and Lemma \ref{3.6} that $F(Y_{m+1})$ is an idempotent in
$\End_{U_q(\mathfrak{sp}_{2m})}(V^{\otimes (m+1)})$.
Since the RT-functor $F$ is pivotal, it preserves the traces of morphisms (see \cite[P.160]{FY}).
The rank of an idempotent is equal to its trace and hence it is sufficient to
show that the trace of $Y_{m+1}$ equals zero.

The trace $\tr_n$ of an endomorphism $D\in \mathcal{D}(n,n)$ for any $n\in \mathbb{N}$
in the pivotal category $\mathbb{D}(\mathfrak{sp}_{2m})$ is defined by the evolution
and the coevolution as follows
$$
\tr_n(D):=A_{2n}\circ (D\otimes I_n)\circ U_{2n}.
$$
The following properties can be verified directly
$$\begin{aligned}
\tr_{n+1}(D\otimes I)&=x\tr_n(D),\\
\tr_{n+1}((D\otimes I)\circ T_n\circ (D'\otimes I))&=r\tr_n(D\circ D'),\\
\tr_{n+1}((D\otimes I)\circ E_n\circ (D'\otimes I))&=\tr_n(D\circ D'),
\end{aligned}$$
for any tangles $D,D'\in \mathcal{D}(n,n)$, where $T_n, E_n$ are identified with
tangles in $\mathcal{D}(n+1,n+1)$ by Theorem \ref{2.9}.

A particular choice of the reduced expression for the longest length permutation gives
$$
Y_{m+1}=Y_1(1)Y_2(2)\cdots Y_m(m)Y_m.
$$
Hence, we have
$$\begin{aligned}
&\quad \tr_{m+1}(Y_{m+1})\\
&=\frac{-1}{[m+1]}\tr_{m+1}\left(Y_1(1)Y_2(2)\cdots Y_{m-1}(m-1)
\left([m]T_m-q^m+\frac{q^m-q^{-m}}{1+rq^{-2m+1}}E_m\right)Y_m \right)\\
&=\frac{-1}{[m+1]}\left([m]r-q^mx+\frac{q^m-q^{-m}}{1+rq^{-2m+1}}\right)
\tr_m(Y_1(1)Y_2(2)\cdots Y_{m-1}(m-1)Y_m).
\end{aligned}$$
When $r=-q^{2m+1}$, $x=1-[2m+1]$ and then $\tr_{m+1}(Y_{m+1})=0$.
\end{proof}

\begin{definition}\label{3.8}
Let $\langle Y_{m+1}\rangle$ be the subspace of $\oplus_{s,t}\mathcal{D}(s,t)$
spanned by the morphisms in $\mathbb{D}(\mathfrak{sp}_{2m})$ obtained from
$Y_{m+1}$ by composition and tensor product. Set $\langle Y_{m+1}\rangle^s_t:=
\langle Y_{m+1}\rangle \cap\mathcal{D}(s,t)$.
\end{definition}

The FFT and SFT of invariants for the quantized symplectic group can
be respectively interpreted as Parts (i) and (ii) of the following
theorem, which is the quantum version of \cite[Theorem 4.8]{LZ2} in the symplectic case.

\begin{theorem}\label{3.9}
With notations as above and $r=-q^{2m+1}$, we have

\begin{enumerate}
\item[(i)] the RT-functor $F: \mathbb{D}(\mathfrak{sp}_{2m})\rightarrow
\mathcal{T}(V)$ is full, i.e. $F$ is surjective on $\Hom$ spaces;

\item[(ii)] the map $F^s_t: \mathcal{D}(s,t)\rightarrow H(s,t)$ is injective
if $s+t\leq 2m$, and $\Ker(F^s_t)=\langle Y_{m+1}\rangle^s_t$ if $s+t>2m$.
\end{enumerate}
\end{theorem}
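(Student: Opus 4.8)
The plan is to mirror the strategy of \cite[Theorem 4.8]{LZ2}, replacing the linear (classical) fundamental theorems by their quantum counterparts together with the categorical machinery developed in Sections \ref{xxsec2}--\ref{xxsec3}. I would treat the three assertions --- fullness, injectivity in the small range, and the identification of the kernel in the large range --- in that order, since each builds on the previous one.

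\textbf{Part (i): fullness.} First I would reduce surjectivity of $F$ on all $\Hom$ spaces to surjectivity of the algebra maps $F^n_n\colon\mathcal{D}(n,n)\to\End_{U_q(\mathfrak{sp}_{2m})}(V^{\otimes n})$. Indeed, by Lemma \ref{2.12} the maps $\mathbb{A}_t^n$, $\mathbb{U}_s^t$ and their image-side analogues $F\mathbb{A}_t^n$, $F\mathbb{U}_s^t$ are mutually inverse bijections intertwining the various $F^s_t$; composing the appropriate chain of such bijections identifies the map $F^s_t\colon\mathcal{D}(s,t)\to H(s,t)$ with $F^{n}_{n}$ for $n=\tfrac{s+t}{2}$ (recall $s+t$ is even, or else both sides vanish). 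So it suffices to know that $\mathcal{D}(n,n)=B_n(-q^{2m+1},q)$ surjects onto $\End_{U_q(\mathfrak{sp}_{2m})}(V^{\otimes n})$ via $T_i\mapsto\beta_i'$, $E_i\mapsto\gamma_i'$; this is precisely the Schur--Weyl duality FFT for the quantized symplectic group, available from \cite{CP,Ha,Hu}. That gives (i).

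\textbf{Part (ii), the easy half.} For $s+t\le 2m$ I would invoke a dimension count. On the diagram side, $\mathcal{D}(s,t)$ has a basis of Brauer-type diagrams (no closed components, at most $\min(s,t)$ through-strands), of the same cardinality as the classical Brauer algebra basis; on the module side, for $s+t\le 2m$ the space $H(s,t)$ has the ``generic'' dimension, equal to that cardinality --- this is the quantum analogue of the statement that the symplectic invariants of $V^{\otimes(s+t)}$ are spanned by products of the form without relations below the critical degree. Since $F^s_t$ is surjective by (i) between spaces of equal finite dimension, it is an isomorphism. The clean way to organize this is again to pass to the case $s=t=n$ with $2n\le 2m$ and quote that $\dim B_n(r,q)=(2n-1)!!=\dim\End_{U_q(\mathfrak{sp}_{2m})}(V^{\otimes n})$ in that range.

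\textbf{Part (ii), the kernel.} For $s+t>2m$ the inclusion $\langle Y_{m+1}\rangle^s_t\subseteq\Ker(F^s_t)$ is immediate: by Proposition \ref{3.7} we have $F(Y_{m+1})=0$, and since $F$ is a monoidal functor it kills every morphism obtained from $Y_{m+1}$ by tensoring and composing. The reverse inclusion is the heart of the matter. My plan is: (a) reduce once more, via Lemma \ref{2.12}, to the endomorphism case, i.e. show $\Ker(F^n_n)=\langle Y_{m+1}\rangle^n_n\cap\mathcal{D}(n,n)$ for $2n>2m$ --- here one must check that the bijections $\mathbb{A}_t^n$, $\mathbb{U}_s^t$ carry the two-sided ideal-like subspace $\langle Y_{m+1}\rangle$ to itself, which holds because these maps are built from tensoring with the structural morphisms $A$, $U$ and hence send $Y_{m+1}$-generated morphisms to $Y_{m+1}$-generated morphisms; (b) identify $\langle Y_{m+1}\rangle\cap B_n(r,q)$ with the two-sided ideal $B_n(r,q)\,e\,B_n(r,q)$ generated by $e:=Y_{m+1}\otimes I_{n-m-1}$, using Lemma \ref{3.6} (which says $Y_{m+1}$ is a central idempotent in $B_{m+1}$, so the subspace it generates inside $B_n$ is exactly the corresponding ideal); (c) invoke the known SFT for the quantized symplectic group \cite{HuXiao,DHS}, which states precisely that $\Ker\bigl(B_n(-q^{2m+1},q)\to\End_{U_q(\mathfrak{sp}_{2m})}(V^{\otimes n})\bigr)$ equals that ideal $B_n(r,q)\,e\,B_n(r,q)$ --- this uses the fact, established in \cite{HuXiao}, that the kernel is generated by the image of the sign-representation idempotent of $B_{m+1}$, which is our $Y_{m+1}$ by Lemma \ref{3.6}. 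Combining (a)--(c) gives $\Ker(F^s_t)=\langle Y_{m+1}\rangle^s_t$.

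The main obstacle I anticipate is step (b)--(c): matching the categorically-defined subspace $\langle Y_{m+1}\rangle$ with the algebraically-defined ideal appearing in \cite{HuXiao}. One has to verify carefully that $Y_{m+1}$ as constructed here (the Yang--Baxter element for the longest element, shown in Lemma \ref{3.6} to be a central idempotent realizing the sign representation $\rho$) coincides, up to the identification of Theorem \ref{2.9}, with whatever generator of the kernel ideal is used in the cited SFT; and that tensoring/composing $Y_{m+1}$ in the category reproduces exactly the two-sided ideal it generates in each $B_n(r,q)$ with no extra elements. Once the dictionary between the pivotal-categorical and the BMW-algebraic formulations is pinned down, the rest is bookkeeping with Lemma \ref{2.12}.
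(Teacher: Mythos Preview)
Your reduction via Lemma \ref{2.12} to the endomorphism case $s=t=n$, your treatment of (i) via type $C$ Schur--Weyl duality, and your treatment of the injective range $n\le m$ all coincide with the paper's argument. The genuine divergence is in the identification of $\Ker(F^n_n)$ for $n>m$.

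Your plan (step (c)) is to invoke the quantum SFT of \cite{HuXiao,DHS} directly, after matching $Y_{m+1}$ with the sign-representation antisymmetrizer used there. The paper does \emph{not} do this. Instead it argues by a specialisation/dimension inequality: since $F(Y_{m+1})=0$ (Proposition \ref{3.7}), the ideal $(Y_{m+1}^{(n)})$ lies in $\Ker(F^n_n)$, and it suffices to show $\dim B_n(-q^{2m+1},q)/(Y_{m+1}^{(n)})\le\dim\End_{U_q(\mathfrak{sp}_{2m})}(V^{\otimes n})$. For this the paper observes that under the limit $q\to 1$ one has $\dim(Y_{m+1}^{(n)})\ge\dim_{\mathbb{Q}}(Y_{m+1}^{(n)}\!\downarrow_{q=1})$, and the right-hand side equals $\dim B_n-\dim\End_{Sp_{2m}}(V^{\otimes n})$ by the \emph{classical} SFT as formulated in \cite[Theorem 8.2(i)]{LZ2}; combining this with the equality of classical and quantum endomorphism dimensions finishes the proof.

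So both approaches are valid, but they buy different things. Your route is shorter once one has \cite{HuXiao,DHS} in hand, but it is somewhat circular relative to the paper's aims: one of the points of the paper is precisely to give a clean categorical derivation of the quantum SFT, rather than to import it. The paper's specialisation argument depends only on the classical SFT (via \cite{LZ2}) together with a soft semicontinuity-of-dimension step, and thereby yields an independent proof. Your anticipated ``main obstacle'' (matching $Y_{m+1}$ with the kernel generator of \cite{HuXiao}) is real for your approach but is simply absent in the paper's, which never needs that identification.
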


\begin{proof}
It is clear that the theorem is true when $s+t$ is odd. Now assume
that $s+t=2n$ for some $n\in \mathbb{N}$. It follows from Lemma \ref{2.12} that
we have a canonical isomorphism $\mathcal{D}(s,t)\cong \mathcal{D}(n,n)$,
and the study of $F^s_t$ is equivalent to that of $F^n_n$.

(i). The surjectivity of $F^n_n$ follows from the Schur-Weyl duality of type $C$,
see \cite[10.2]{CP}, or \cite{Ha,Hu} for example.

(ii). When $2n\leq 2m$, i.e. $n\leq m$, the injectivity of $F^n_n$ also follows from
the Schur-Weyl duality of type $C$, see \cite[Theorem 5.5]{Hu} for example.
When $m<n$, we set $Y_{m+1}^{(n)}:=Y_{m+1}\otimes I_{n-m-1}\in B_n(-q^{2m+1},q)$.
Hence by Proposition \ref{3.7} it is sufficient to show
$$
\dim B_n(-q^{2m+1},q)/(Y_{m+1}^{(n)})\leq\dim H(n,n)=\dim \End_{U_q(\mathfrak{sp}_{2m})}(V^{\otimes n}),
$$
where $(Y_{m+1}^{(n)})$ means the ideal of $B_n(-q^{2m+1},q)$ generated by $Y_{m+1}^{(n)}$.
In fact
$$\begin{aligned}
\dim (Y_{m+1}^{(n)})&\geq \dim_{\mathbb{Q}}(Y_{m+1}^{(n)}\downarrow_{q=1})\\
&=\dim B_n(-q^{2m+1},q)-\dim
\End_{U_q(\mathfrak{sp}_{2m})}(V^{\otimes n}),
\end{aligned}$$
where the first inequality follows from the specialization $\lim_{q\rightarrow 1}$,
see the Part (i) of \cite[Theorem 8.2]{LZ2}.
This completes the proof of the theorem.
\end{proof}

\begin{remark}
Theorem \ref{3.9} with $s=2n, t=0$ yields the linear formulation of fundamental theorems
for the quantized symplectic group,
while the endomorphism algebra formulation arises from the case $s=t=n$. The equivalence
of these two versions is a consequence of Lemma \ref{2.12}.
\end{remark}

\noindent{\bf Acknowledgements}

We would like to thank the referees for their valuable comments and suggestions.


\begin{thebibliography}{}

\bibitem{BW} J. Birman and H. Wenzl, Braids, link polynomials and a new algebra,
{\it Trans. Amer. Math. Soc.} {\bf 313} (1989), 249--273.

\bibitem{Br} R. Brauer, On algebras which are connected with
semisimple continuous groups, {\it Ann. Math.} {\bf 38} (1937), 857--872.

\bibitem{CP} V. Chari and A. Pressley, A guide to quantum groups, Cambridge University Press, Cambridge, 1994.

\bibitem{Dr} V. G. Drinfel'd, Quantum groups, Proc. ICM (Berkeley, California, 1986),
798--820, Amer. Math. Soc., Providence RI, 1987.

\bibitem{DDH} R. Dipper, S. Doty and J. Hu, Brauer algebras, symplectic Schur algebras and Schur-Weyl
duality, {\it Trans. Amer. Math. Soc.} {\bf 360} (2008), 189--213.

\bibitem{DHS} R. Dipper, J. Hu and F. Stoll, Symmetrizers and antisymmetrizers for the BMW algebra,
{\it J. Algebra Appl.} {\bf 12} (2013), no.1350032, 22pp.

\bibitem{DH} S. Doty and J. Hu, Schur-Weyl duality for orthogonal groups,
{\it Proc. London Math. Soc.} {\bf 98} (2009), 679--713.

\bibitem{DPS} J. Du, B. Parshall and L. Scott, Quantum Weyl reciprocity and tilting modules,
{\it Comm. Math. Phys.} {\bf 195} (1998), 321--352.

\bibitem{EGNO} P. Etingof, S. Gelaki, D. Nikshych and V. Ostrik, Tensor categories,
Math. Surveys Monographs {\bf 205} Amer. Math. Soc., Providence RI, 2015.

\bibitem{FY} P. Freyd and D. Yetter, Braided compact closed categories with applications
to low-dimensional topology, {\it Adv. Math.} {\bf 77} (1989), 156--182.

\bibitem{Ha} T. Hayashi, Quantum deformation of classical groups,
{\it Publ. RIMS. Kyoto Uni.} {\bf 28} (1992), 57--81.

\bibitem{Hu} J. Hu, BMW algebra, quantized coordinate algebra and
type $C$ Schur-Weyl duality, {\it Represent. Theory} {\bf 15} (2011), 1--62.

\bibitem{HuXiao} J. Hu and Z. Xiao, On tensor spaces for Birman-Murakami-Wenzl
algebras, {\it J. Algebra} {\bf 324} (2010), 2893--2922.

\bibitem{Ji} M. Jimbo, A $q$-analogue of $U(\mathfrak{gl}(N+1))$, Hecke algebra
and the Yang-Baxter equation, {\it Lett. Math. Phys.} {\bf 11} (1986), 247--252.

\bibitem{Jo} V. F. R. Jones, Hecke algebra representations of braid groups and
link polynomials, {\it Ann. Math.} {\bf 126} (1987), 335--388.

\bibitem{JS} A. Joyal and R. Street, Braided tensor categories,
{\it Adv. Math.} {\bf 102} (1993), 20--78.

\bibitem{Ka} L. H. Kauffman, An invariant of regular isotopy, {\it Trans. Amer. Math. Soc.}
{\bf 318} (1990), 417--471.

\bibitem{LR} R. Leduc and A. Ram, A ribbon Hopf algebra approach to
the irreducible representations of centralizer algebras: the Brauer,
Birman-Wenzl, and type A Iwahori-Hecke algebras,
{\it Adv. Math.} {\bf 125} (1997), 1--94.

\bibitem{LZ1} G. I. Lehrer and R. B. Zhang, The second fundamental theorem
of invariant theory for the orthogonal group, {\it Ann. Math.} {\bf 176} (2012), 2031--2054.

\bibitem{LZ2} G. I. Lehrer and R. B. Zhang, The Brauer category and invariant theory,
{\it J. Euro. Math. Soc.} {\bf 17} (2015), 2311--2351.

\bibitem{LZZ} G. I. Lehrer, H. C. Zhang and R. B. Zhang, A quantum analogue of the
first fundamental theorem of invariant theory, {\it Comm. Math. Phys.} {\bf 301} (2011), 131--174.

\bibitem{L} G. Lusztig, Introduction to Quantum Group, Progress in
Math. {\bf 110} Birkh\"{a}user, Boston, 1993.

\bibitem{Ma} H. Matsumoto, G\'{e}n\'{e}rateurs et relations des groupes de Weyl
g\'{e}n\'{e}ralis\'{e}s, {\it C. R. Acad. Sci. Paris} {\bf 258} (1964), 3419--3422.

\bibitem{Mu} J. Murakami, The Kauffman polynomial of links and
representation theory, {\it Osaka J. Math} {\bf 26}
(1987),745--758.

\bibitem{RT} N. Yu. Reshetikhin and V. G. Turaev, Ribbon graphs and their invariants
derived from quantum groups, {\it Comm. Math. Phys.} {\bf 127} (1990), 1--26.

\bibitem{RW} M. Rubey and B. W. Westbury, A combinatorial approach to classical representation
theory, arXiv[math.RT]:1408.3592v3, 2015.

\bibitem{Tu} V. G. Turaev, Quantum invariants of knots and 3-manifolds,
de Gruyter Studies in Mathematics, {\bf 18} Walter de Gruyter \& Co., Berlin, 1994.

\bibitem{W} H. Weyl, The classical groups, their invariants and representations,
Princeton University Press, Princeton N.J. 1940.
\end{thebibliography}
\end{document}